\documentclass[11pt]{amsart}

\usepackage{amssymb, amsmath, mathrsfs, mathtools}
\numberwithin{equation}{section}

\usepackage{bm}
\usepackage{commutative-diagrams}
\usepackage[inline]{enumitem}
\usepackage[colorlinks=true, linkcolor=blue, citecolor=blue, urlcolor=blue]{hyperref}

\usepackage{aliascnt, amsthm, cleveref}
\theoremstyle{definition}
\newtheorem{theorem}{Theorem}[section]
\newaliascnt{conjecture}{theorem}
\newtheorem{conjecture}[conjecture]{Conjecture}
\aliascntresetthe{conjecture}
\newaliascnt{definition}{theorem}
\newtheorem{definition}[definition]{Definition}
\aliascntresetthe{definition}
\newaliascnt{example}{theorem}
\newtheorem{example}[example]{Example}
\aliascntresetthe{example}
\newaliascnt{lemma}{theorem}
\newtheorem{lemma}[lemma]{Lemma}
\aliascntresetthe{lemma}
\newaliascnt{remark}{theorem}
\newtheorem{remark}[remark]{Remark}
\aliascntresetthe{remark}
\newaliascnt{proposition}{theorem}
\newtheorem{proposition}[proposition]{Proposition}
\aliascntresetthe{proposition}
\newaliascnt{corollary}{theorem}
\newtheorem{corollary}[corollary]{Corollary}
\aliascntresetthe{corollary}
\Crefname{theorem}{Theorem}{Theorems}
\Crefname{conjecture}{Conjecture}{Conjectures}
\Crefname{definition}{Definition}{Definitions}
\Crefname{example}{Example}{Examples}
\Crefname{lemma}{Lemma}{Lemmas}
\Crefname{proposition}{Proposition}{Propositions}
\Crefname{corollary}{Corollary}{Corollaries}

\counterwithin{equation}{subsection}

\DeclareMathOperator{\dep}{dp}
\DeclareMathOperator{\End}{End}
\DeclareMathOperator{\ldep}{\underline{dp}}
\DeclareMathOperator{\gr}{gr}

\DeclareMathOperator{\Res}{Res}
\DeclareMathOperator{\Specm}{Specm}
\DeclareMathOperator{\spn}{span}
\DeclareMathOperator{\Sym}{Sym}
\DeclareMathOperator{\Tr}{Tr}
\DeclareMathOperator{\Vir}{Vir}

\renewcommand{\hom}{\operatorname{Hom}}

\title[Zhu's algebra and the $C_2$-algebra]{Zhu's algebra and the $C_2$-algebra of a classically free vertex operator algebra}

\author{Bailin Song $^1$}
\email{bailinso@ustc.edu.cn}
\author{Xianlong Zeng $^1$}
\email{zengxl2023@mail.ustc.edu.cn}

\thanks{$^1$ School of Mathematical Sciences, University of Science and Technology of China, Hefei, Anhui 230026, P. R. CHINA}

\begin{document}
	\begin{abstract}
		We prove that, for a classically free vertex algebra, the $C_2$-algebra is naturally isomorphic to the associated graded algebra of Zhu's algebra. 
		As an application, we show that non-trivial holomorphic VOAs of CFT type are not classically free. 
		This includes the affine vertex operator algebra of $E_8$ at level one, as well as the Moonshine VOA.
	\end{abstract}
	
	\maketitle
	
	\section{Introduction}
		Zhu's algebra $A(V)$ and $C_2$-algebra $R_V$ associated with a vertex operator algebra (VOA) $V$ are introduced in Zhu's foundational work \cite{Z}.  
		The Zhu's algebra $A(V)$ plays a key role in the representation theory of $V$: 
			irreducible admissible $V$-modules correspond bijectively to irreducible $A(V)$-modules. 
		This correspondence allows one to translate classification of irreducible admissible $V$-modules into classification of irreducible $A(V)$-modules. 
		The Zhu's algebra has been calculated for many families of VOAs, 
			including lattice VOA $V_L$ for positive-definite even lattice $L$ \cite{DLM1}, 
			affine VOA $L_k(\mathfrak{g})$ for positive integer $k$ \cite{FZ}, and certain $\mathcal{W}$-algebras \cite{A}.
		
		The $C_2$-algebra $R_V$ is closely related to Zhu's algebra. 
		The hypothesis of its finite-dimensionality is assumed in many VOA theories (see for example \cite{DLM3,H2,Z}). 
		There is a conjecture due to \cite{Z}, which is still an open problem.
		\begin{conjecture}\label{rational implies C2-cofinite}
			Let $V$ be a $\mathbb{N}$-graded rational VOA. Then $R_V$ is finite-dimensional.
		\end{conjecture}
		The $C_2$-algebra provides a connection between VOA, Poisson geometry and representation theory via the associated variety $X_V=\Specm R_V$. 
		The 4d/2d corresponding discovered by Beem et al. in \cite{BLLPRR} associates a VOA to a 4-dimensional $\mathcal{N}=2$ superconformal field theory, 
			it is conjectured that the Higgs branch of the 4d theory is isomorphic the associated variety the VOA.
		
		There is a canonical filtration of Zhu's algebra, 
			giving rise to the associated graded algebra $\gr A(V)$ and a surjective morphism $\eta_V:R_V\twoheadrightarrow\gr A(V)$. 
		Then a natural question arises: under what condition is this map bijective?  
		It is proved in \cite{ALY} that $\eta_V$ is an isomorphism when $V$ admits a PBW basis. 
		The question is subtle since $\eta_V$ are isomorphisms for many simple affine vertex algebras $L_k(\mathfrak{g})$ at positive integer level, 
			while it is not an isomorphism for $\mathfrak{g}=E_8$ at level one (see \cite{GG}).
		
		Classical freeness was introduced by van Ekeren and Heluani in the context of chiral homology \cite{EH} 
			and it plays an important role in their computations of chiral homology.
		This property asserts that the associated graded algebra $\gr V$ for Li filtration is the universal differential algebra generated by $R_V$. 
		Several families of VOAs have been shown to be classically free, 
			including Virasoro minimal models $\Vir_{2,q}$ \cite{EH}, affine VOAs $L_1(\mathfrak{sl}_n)$ \cite{Ka}
			and $L_k(\mathfrak{sp}_{2n})$ for positive integer $k$ \cite{LS}.
		
		In this work, we prove the following theorem (cf. \Cref{main}).
		\begin{theorem}\label{thm:main}
			If an $\mathbb{N}$-graded vertex algebra $V$ is classically free, then the natural projection
			$$\eta_V:R_V\twoheadrightarrow\gr A(V)$$
			is an isomorphism.
		\end{theorem}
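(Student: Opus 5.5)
We need to show that $\eta_V$ is injective, i.e.\ that $\dim (R_V)_n = \dim(\gr A(V))_n$ for each degree, or more concretely that the kernel of $V \to \gr A(V)$ is no larger than $C_2(V)$. Recall that $A(V)=V/O(V)$ is filtered by $F_pA(V)$, the image of $\bigoplus_{n\le p}V_n$. The map $\eta_V$ sends the class of $a\in V_p$ in $R_V=V/C_2(V)$ to the class of $[a]$ in $F_pA(V)/F_{p-1}A(V)$. Injectivity amounts to the following claim: if $a\in V_p$ and $a\in O(V)+\bigoplus_{n<p}V_n$, then $a\in C_2(V)$.

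\textbf{Step 1: a two-variable filtration.} Put $G_pV=\bigoplus_{n\le p}V_n$ and combine it with Li's filtration $F^kV$, which is spanned by $a^1_{(-n_1-1)}\cdots a^r_{(-n_r-1)}b$ with $\sum n_i\ge k$. Here $F^1V=C_1$-type terms and $F^2$-level terms lie in $C_2(V)$. The product $a\circ b=\sum_i\binom{\mathrm{wt}\,a}{i}a_{(i-2)}b$ has top-weight part $a_{(-2)}b\in C_2(V)$, with lower-weight corrections $a_{(i-2)}b$ for $i\ge1$. So $O(V)$ is "generated" by $C_2(V)$ up to lower weight. The point to control is whether a lower-weight combination can cancel down to a top-weight element outside $C_2(V)$.

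\textbf{Step 2: use classical freeness to build a PBW-type spanning and independence statement.} Classical freeness gives $\gr^F V\cong J_\infty(R_V)$, the universal differential algebra generated by $R_V$ (the arc algebra). Choose a homogeneous presentation $R_V=\mathbb{C}[x_i]/I$. Then $\gr^F V=\mathbb{C}[x_i^{(j)}]/(\partial^k I)$. I will construct a linear map, or rather a degeneration argument, relating $\gr A(V)$ to $R_V$. This parallels \cite{ALY}, which treats the PBW case, but the proof must replace "PBW basis" with "$J_\infty R_V$ has no extra relations."

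Concretely, I would show that the relations $a\circ b\in O(V)$ and the commutator relations of $A(V)$ lift to degree-preserving relations in the jet algebra. The plan is to prove that
\[
O(V)\cap G_pV\subset C_2(V)+G_{p-1}V,
\]
by induction on $p$. Take $u=\sum_j a^j\circ b^j\in G_pV$. Its top-weight components might cancel. Expand using $a\circ b = a_{(-2)}b+\mathrm{wt}(a)\,a_{(-1)}b+\dots$. The leading term in the Li filtration lives in $F^1$, and its symbol in $\gr^FV$ is $\partial(\bar a)\bar b$ plus lower terms.

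\textbf{Step 3 (main obstacle): cancellation of leading symbols.} If the top-weight parts of $\sum_j a^j_{(-2)}b^j$ cancel in $V_p$, we must show the next terms also lie in $C_2(V)$ modulo $G_{p-1}$. Here I would use that in $J_\infty(R_V)$ the first-order derivations are "free". A relation $\sum \partial\bar a^j\cdot \bar b^j=0$ in $F^1/F^2$ forces, via the universal property of the jet algebra, an identity $\sum \bar a^j\bar b^j\in$ (something), namely that $\sum_j \bar a^j\otimes\bar b^j$ lies in the kernel of the Kähler-differential pairing $\Omega_{R_V}\otimes R_V$. By freeness, $F^1/F^2\cong \Omega_{R_V}$-type piece; injectivity there is exactly what classical freeness provides. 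This should show that the lower-weight corrections $\sum_j \mathrm{wt}(a^j)\,a^j_{(-1)}b^j$ reduce modulo $C_2(V)$ to a degree-$(p-1)$ multiple of an Euler-type derivation applied to a vanishing expression. Hence it vanishes in $R_V$, and induction closes. The delicate bookkeeping of weights, i.e.\ the $\binom{\mathrm{wt}\,a}{i}$ coefficients matching the Euler vector field on $R_V$, is where I expect the real work.
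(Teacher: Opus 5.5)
Your reformulation of injectivity as $O(V)\cap G_pV\subseteq C_2(V)+G_{p-1}V$ is correct. Your Euler-field idea does handle the first layer of cancellation. If $\sum_j a^j_{(-2)}b^j=0$, its symbol $\sum_j\bar b^j\,d\bar a^j$ vanishes in $(\gr V)_1$, which classical freeness identifies with $(\mathcal{J}_\infty R_V)_1\cong\Omega_{R_V}$. Contracting with the Euler derivation of $R_V$ then gives $\sum_j\deg(a^j)\,a^j_{(-1)}b^j\in C_2(V)$, i.e.\ the weight $d-1$ component of $\sum_j a^j\circ b^j$ lies in $C_2(V)$. (Also, $F^1V$ is exactly $C_2(V)$, not a ``$C_1$-type'' space.)

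But this does not close an induction. An element of $O(V)\cap G_pV$ is a combination of $a^j\circ b^j$ whose total weight $d=\deg a^j+\deg b^j+1$ may exceed $p$ by any amount, and cancellation can persist through many weights. Suppose you write the weight $d-1$ component as $\sum_k x^k_{(-2)}y^k$ and subtract $\sum_k x^k\circ y^k$. The result is still expressed through products of total weight $d$. Its weight $d-2$ component, $\sum_j\binom{\deg a^j}{2}a^j_{(0)}b^j-\sum_k\deg(x^k)\,x^k_{(-1)}y^k$, is controlled by nothing in your sketch. Induction on $p$ gives no leverage, since its hypothesis concerns smaller $p$, not smaller total weight. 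What is actually needed is that the whole element can be rewritten with lower total weight. Setting $O_d=\spn\{a\circ b\mid\deg a+\deg b<d\}$, one must prove $O_d\cap V_{<d}\subseteq O_{d-1}$. Iterating gives $O(V)\cap V_{\le d}=O_d$, and an induction on the excess $\max_i(\deg a^i+\deg b^i+1)-d$ shows this is equivalent to injectivity of $\eta_V$.

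Proving $O_d\cap V_{<d}\subseteq O_{d-1}$ is where the work lies, and your sketch uses only the depth-one piece of classical freeness. The relation $\sum_j a^j_{(-2)}b^j=0$ holds exactly in $V$. After its $F^1/F^2$ symbol is used up, one is left with relations in $F^2V,F^3V,\dots$, and the paper uses $\gr V\cong\mathcal{J}_\infty R_V$ in every depth to dispose of them. Its mechanism has two ingredients:
\begin{itemize}
\item The operations $a\bullet_nb=\Res_z z^n(1+z)^{\deg a}Y(a,z)b$ deform $a_{(n)}b$. They satisfy $V_m\bullet_kV_n\subseteq O_{m+n-k-1}$ for $k\le-2$, together with commutator and associator formulas modulo $O_d$.
\item The operator $\widetilde T=T+H$ acts as a derivation, and $a\circ\bm1=\widetilde Ta$. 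This is where your Euler operator really lives.
\end{itemize}
For formal noncommutative words $P$ in symbols $\partial^na$, the paper compares the mode-product value $\overline P\bm1$ with the $\bullet$-value $\widetilde P\bm1$. It proves, by decreasing induction on $\ldep P$, that $\overline P\bm1=0$ and $\ldep P>0$ imply $\widetilde P\bm1\in O_{\deg P-1}$. At each depth:
\begin{itemize}
\item classical freeness writes the lowest-depth part as an element of the differential ideal generated by the relations of $R_V$;
\item each such generator is replaced by a deeper expression with the same mode-product value;
\item commutators created by reordering are likewise pushed deeper.
\end{itemize}
Your Step 3 has no substitute for this, so as written the proposal establishes only the first layer of the cancellation, not the theorem.
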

		As an immediate consequence, 
			the \Cref{rational implies C2-cofinite} is true for classically free VOAs, since $\dim A(V)<\infty$ if $V$ is rational (see \Cref{Zhu theory}).
		\begin{theorem}
			Let $V$ be an $\mathbb{N}$-graded classically free VOA. Then $R_V$ is finite-dimensional if $V$ is rational.
		\end{theorem}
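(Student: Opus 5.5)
This corollary follows from \Cref{thm:main} combined with Zhu's theory. We will not work with $V$ directly. Instead we pass through the finite-dimensionality of Zhu's algebra and then transfer it across the isomorphism $\eta_V$.

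\textbf{Step 1.} We first recall from \Cref{Zhu theory} that if $V$ is a rational $\mathbb{N}$-graded VOA, then $A(V)$ is a finite-dimensional semisimple associative algebra. Semisimplicity holds because every admissible module is completely reducible, and irreducible admissible $V$-modules correspond bijectively to irreducible $A(V)$-modules. Finite-dimensionality follows because there are only finitely many irreducibles and $A(V)$ is semisimple.

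\textbf{Step 2.} Next we compare dimensions along the canonical filtration. The filtration $F_pA(V)$ is the image of $\bigoplus_{n\le p}V_n$. It is increasing and exhaustive, with $F_{-1}A(V)=0$. Hence $\gr A(V)=\bigoplus_p F_pA(V)/F_{p-1}A(V)$, and as a vector space
$$\dim \gr A(V)=\sum_p \dim F_pA(V)/F_{p-1}A(V)=\dim A(V)<\infty.$$
Exhaustiveness and the starting point $F_{-1}=0$ are all we need here. The sum telescopes because $A(V)$ is finite-dimensional, so the filtration stabilizes at $A(V)$ after finitely many steps.

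\textbf{Step 3.} Since $V$ is classically free, \Cref{thm:main} says that $\eta_V:R_V\to\gr A(V)$ is an isomorphism. Therefore $\dim R_V=\dim\gr A(V)=\dim A(V)<\infty$.

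The only potential subtlety is Step 1: the precise form of "rational implies $\dim A(V)<\infty$". This should be taken from the cited Zhu theory, for instance in the form of the Dong–Li–Mason result that $A(V)$ is finite-dimensional semisimple for rational $V$. All the substantive work lies in \Cref{thm:main}.
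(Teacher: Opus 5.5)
Your proposal is correct and follows the paper's own argument. The paper also uses part (4) of \Cref{Zhu theory} to get $\dim A(V)<\infty$, the exhaustive filtration $A_n(V)$ with $A_{-1}(V)=0$ to get $\dim\gr A(V)=\dim A(V)$, and \Cref{thm:main} to transfer this to $R_V$. The heuristic you give in Step 1 (via the bijection on irreducibles) does not by itself prove semisimplicity, but you, like the paper, rely on the citation to Zhu and Dong--Li--Mason for that step, so nothing is missing.
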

		Conversely, if $\dim R_V\neq\dim A(V)$, $V$ is not classically free. And we have the following theorem (cf. \Cref{Zhu=C}).
		\begin{theorem}
			A non-trivial holomorphic VOA of CFT type is not classically free.
		\end{theorem}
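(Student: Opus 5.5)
The strategy is to compare dimensions. By \Cref{thm:main}, a classically free $V$ has $R_V\cong\gr A(V)$ as graded vector spaces, so $\dim R_V=\dim A(V)$, where both sides are allowed to be infinite. It therefore suffices to show that for a non-trivial holomorphic VOA $V$ of CFT type, $\dim A(V)<\dim R_V$. We argue by contradiction: assume $V$ is classically free.

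\emph{Step 1: Zhu's algebra.} Holomorphic means that $V$ is rational and its only irreducible module is $V$ itself. By Zhu's theory (\Cref{Zhu theory}), $A(V)$ is a finite-dimensional semisimple algebra whose simple modules correspond to the irreducible $V$-modules. For the module $V$ the relevant module is its top level $V_0=\mathbb{C}\mathbf{1}$, which is one-dimensional since $V$ is of CFT type. Hence $A(V)\cong\mathbb{C}$ and $\dim A(V)=1$.

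\emph{Step 2: the $C_2$-algebra.} By \Cref{thm:main}, $\dim R_V=1$. We always have $R_V=V/C_2(V)$ with $\mathbf{1}\notin C_2(V)$. Since $V$ is of CFT type, $C_2(V)$ is spanned by $a_{(-2)}b$ with $\deg a\ge1$, so it lies in positive degrees. Therefore $\dim R_V=1$ forces $C_2(V)=V_+:=\bigoplus_{n>0}V_n$. In particular $V_1\subset C_2(V)$. But in a CFT-type grading, every element $a_{(-2)}b$ has weight at least $\deg a+1\ge 2$, so $C_2(V)\cap V_1=0$. This gives $V_1=0$.

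We next use $V_2$. Elements of $C_2(V)$ of weight $2$ are spanned by $a_{(-2)}b$ with $\deg a+\deg b+1=2$, which forces $\deg a=1$ (impossible since $V_1=0$) or $b=\mathbf{1}$ with $\deg a=0$. In the latter case $a=\mathbf{1}$ and $\mathbf{1}_{(-2)}\mathbf{1}=0$. So $C_2(V)\cap V_2=0$, and hence $V_2=0$. A non-trivial VOA, however, has conformal vector $\omega\in V_2$ with $\omega\neq0$, since the central charge of a holomorphic VOA is a positive multiple of $8$; more simply, $V\neq\mathbb{C}\mathbf{1}$ implies $\omega\neq0$. This is a contradiction. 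Indeed, the argument shows directly that $\bar\omega\neq 0$ in $R_V$, so $\dim R_V\ge2$. For the $\mathbb{N}$-graded case the same degree count applies, and if one wants $\omega\ne0$ one only needs $L_{-1}\neq0$ on some nonzero vector of positive weight, which holds for a non-trivial VOA.

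\emph{Main obstacle.} The delicate point is Step 1: confirming that Zhu's correspondence really yields $A(V)\cong\mathbb{C}$ rather than a larger semisimple algebra. This requires that the irreducible admissible modules be exactly $V$, which is the holomorphicity hypothesis, together with $\dim V_0=1$ from CFT type. We also need the precise form of \Cref{thm:main}, namely that $\eta_V$ preserves dimension even in infinite dimensions. That part is routine, since here $A(V)$ is finite-dimensional.
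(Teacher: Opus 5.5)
Your proof is correct, and its first half is exactly the paper's: holomorphicity and CFT type give $A(V)\cong\mathbb{C}$, and then \Cref{thm:main} gives $R_V\cong\mathbb{C}$.

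The difference is in how you get the contradiction from $R_V\cong\mathbb{C}$.
\begin{itemize}
\item The paper uses classical freeness a second time to get $\gr V\cong\mathcal{J}_\infty\mathbb{C}=\mathbb{C}$, hence $F^1V=F^2V=\cdots$. It then uses that the Li filtration is separated ($F^pV\subseteq V_{\geq p}$) to force $F^1V=0$ and $V=\mathbb{C}$.
\item You work directly with the spanning set $\{a_{(-2)}b\}$ of $C_2(V)=F^1V$. Since $V_0=\mathbb{C}\mathbf{1}$ and $\mathbf{1}_{(-2)}=0$, $C_2(V)$ has no weight-$1$ part, and once $V_1=0$ it has no weight-$2$ part either. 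So $C_2(V)=V_+$ forces $V_1=V_2=0$, hence $\omega=0$, $L_0=0$, and $V=\mathbb{C}\mathbf{1}$.
\end{itemize}
Your route is more elementary: it needs no jet algebra and no separatedness. It also shows that the second appeal to classical freeness is superfluous, since $R_V\cong\mathbb{C}$ alone already makes a CFT-type VOA trivial.

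Your argument does lean on the conformal vector. You can avoid that by running the same weight count by induction: $V_1=\cdots=V_{n-1}=0$ implies $C_2(V)\cap V_n=0$, hence $V_n=0$. This would also replace your rather vague remark about the $\mathbb{N}$-graded case.

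Two small points:
\begin{itemize}
\item In the weight-$2$ count, the second case is $\deg a=0$, $\deg b=1$, so $a\in\mathbb{C}\mathbf{1}$ and $\mathbf{1}_{(-2)}b=0$. It is not ``$b=\mathbf{1}$''; the conclusion is unaffected.
\item The central-charge remark is unnecessary. The right reason for $\omega\neq0$ is that $L_0=\omega_{(1)}$ acts by $n\neq0$ on any nonzero $V_n$ with $n>0$.
\end{itemize}
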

		This includes the lattice VOAs of positive-definite even self-dual lattices and the Moonshine VOA (cf. \Cref{E8,Leech,moonshine}). 
		In particular, the affine VOA $L_1(E_8)$, which is the lattice VOA of lattice $E_8$, is not classically free. 
		This is confirmed by a direct calculation in \Cref{sec6}.
		It provides a counterexample to a well accepted conjecture that every simple affine VOA at positive integer level is classically free \cite{F}.
		
		The key step in the proof of \Cref{thm:main} is that we introduce a filtration $O_n$ of $O(V)=V\circ V$. The analogue between $O_n$ and $F^1V\cap V_{\leq n}$ is a bridge connecting $O(V)$ and $\gr V$
			
		We also use a family of auxiliary operations $\bullet_n$, 
			which are deformations of the mode products $a_{(n)}b$ and coincides with the products $*$ and $\circ$ defining Zhu's algebras at $n=-1,-2$.
		By systematically studying the interaction between these operations and the filtration $O_n$, 
			we establish the correspondence between relations in $F^1V\cap V_n$ and relations in $O_n/O_{n-1}$, under the assumption of classical freeness.
		
		This paper is organized as follows. 
		We recall the basics of vertex algebras and related structures in \Cref{sec2}, 
			and introduce the operations $\bullet_n$ and establish their basic properties in \Cref{sec3}.
		In \Cref{sec4}, we completes the proof of \Cref{thm:main}.
		In \Cref{sec5}, we lists several consequences of \Cref{thm:main}, including that $L_1(E_8)$ and the Moonshine VOA are not classically free. 
		In \Cref{sec6}, we prove that $L_1(E_8)$ is not classically free by a direct calculation.
		
	\section{Preliminaries}\label{sec2}
		In this section we recall the basic definitions and well-known facts that we will use in the sequel.
		\subsection{Graded vertex algebras}
			We follow the definition of vertex algebras in \cite[Proposition 4.8]{K2}, with a slight simplification regarding parity: 
			In this paper, we use vertex algebra to refer specifically to a pure even space, and use vertex superalgebra for a general superspace (if needed).
			
			\begin{definition}
				A \emph{vertex algebra} consists of a vector space $V$, a distinguished vector $\bm1\in V$, and a linear map
				\begin{align*}
					Y(-,z):V&\to(\End V)[[z,z^{-1}]] \\
					a&\mapsto Y(a,z)=\sum_{n\in\mathbb{Z}}a_{(n)}z^{-n-1}
				\end{align*}
				such that the following axioms hold for all $a,b\in V$:
				\begin{enumerate}[label=(\roman*)]
					\item\emph{field}:
						$$a_{(n)}b=0\quad\text{for }n\gg0;$$
					\item\emph{vacuum}:
						\begin{align*}
							Y(\bm1,z)&=1, \\
							Y(a,z)\bm1|_{z=0}&=a;
						\end{align*}
					\item\emph{Borcherds identity}: 
						Let $F(z,w)$ be a rational function in $z$ and $w$ with poles only at $z=0,w=0$ or $z=w$. Then
						\begin{equation}\label{Borcherds}\begin{aligned}
							\Res_{z-w}&Y(Y(a,z-w)b,w)\iota_{w,z-w}F(z,w) \\
							&=\Res_z\left(Y(a,z)Y(b,w)\iota_{z,w}-Y(b,w)Y(a,z)\iota_{w,z}\right)F(z,w).
						\end{aligned}\end{equation}\end{enumerate}\end{definition}
			
			As special cases of the Borcherds identity \eqref{Borcherds}, we have the following commutative and associative relations:
			\begin{align}
			\label{commutator}
				[a_{(m)},b_{(n)}]&=\sum_{j=0}^\infty\binom{m}{j}(a_{(j)}b)_{(m+n-j)}, \\
			\label{associator}
				(a_{(m)}b)_{(n)}&=\sum_{j=0}^\infty(-1)^j\binom{m}{j}\left(a_{(m-j)}b_{(n+j)}-(-1)^mb_{(m+n-j)}a_{(j)}\right).
			\end{align}
			We also use \eqref{associator} in its series form:
			\begin{equation}\label{series}\begin{aligned}
				Y(a_{(m)}b,w)=\Res_z\Big(&Y(a,z)Y(b,w)\iota_{z,w}(z-w)^m \\
				&-Y(b,w)Y(a,z)\iota_{w,z}(z-w)^m\Big).
			\end{aligned}\end{equation}
			
			\begin{definition}
				Let $V$ be a vertex algebra. Then its \emph{infinitesimal translation operator} $T\in\End V$ is defined by
				\begin{equation}\label{translation}
					Ta=a_{(-2)}\bm1.
			\end{equation}\end{definition}
			
			The infinitesimal translation operator has the following basic properties:
			\begin{gather}
			\label{covariance}
				[T,Y(a,z)]=\frac\partial{\partial z}Y(a,z)=Y(Ta,z), \\
			\label{generating}
				a_{(-1-n)}b=\frac1{n!}(T^na)_{(-1)}b, \\
			\label{derivation}
				T(a_{(k)}b)=(Ta)_{(k)}b+a_{(k)}Tb,
			\end{gather}
			for all $a,b\in V$, $k\in\mathbb{Z}$ and $n\in\mathbb{N}$.
			
			\begin{definition}
				A \emph{graded vertex algebra} consists of a vertex algebra $V$ and a diagonalizable operator $H\in\End V$, such that for all $a\in V$
				\begin{equation}\label{hamiltonian}
					[H,Y(a,z)]=z\frac\partial{\partial z}Y(a,z)+Y(Ha,z).
				\end{equation}\end{definition}
			
			We denote the eigenspace decomposition of $V$ with respect to $H$ by
				$$V=\bigoplus_{n\in\mathbb{C}}V_n,\quad V_n=\ker(H-nI).$$
			An eigenvector $a$ of $H$ is called \emph{homogeneous}, and its eigenvalue is denoted by $\deg a$. For a graded vertex algebra $V$ we have
			$$\deg\bm1=0$$
			and
			\begin{align}
			\label{grading}
				(V_m)_{(k)}(V_n)&\subseteq V_{m+n-k-1}, \\
			\label{shifting}
				TV_n&\subseteq V_{n+1},
			\end{align}
			for all $m,n\in\mathbb{N}$ and $k\in\mathbb{Z}$.
			
			This paper focus on $\mathbb{N}$-graded vertex algebras, i.e. we assume that
			$$V=\bigoplus_{n\in\mathbb{N}}V_n$$
			unless otherwise stated.
			
			We finally introduce some concepts that will be used in \Cref{sec5}.
			\begin{definition}
				A \emph{vertex operator algebra} consists of a graded vertex algebra $V$ and a distinguished element $\omega\in V_2$ with
				$$Y(\omega,z)=\sum_{n\in\mathbb{Z}}L_nz^{-2-n},$$
				such that
				$$L_{-1}=T,\quad L_0=H,$$
				and
				$$[L_m,L_n]=(m-n)L_{m+n}+\delta_{m+n,0}\frac{m^3-m}{12}cI,$$
				where $c$ is a constant.
			\end{definition}
			
			Since this paper does not involve the specific structures of the modules over VOAs, we only briefly describe the definition of rationality. 
			See \cite{DJ} for details.
			\begin{definition}
				A vertex operator algebra $V$ is called \emph{rational}, if the category of admissible $V$-modules is semisimple, 
					i.e. any admissible $V$-module is a direct sum of simple admissible $V$-modules.
			\end{definition}
			
			\begin{definition}
				A vertex operator algebra $V$ is called \emph{holomorphic}, 
					if $V$ is rational, and $V$ is the unique simple admissible $V$-module, up to an isomorphism.
			\end{definition}
			
			\begin{definition}
				A vertex operator algebra $V$ is called of \emph{CFT type}, provided it is $\mathbb{N}$-graded and $V_0=\mathbb{C}\bm1$.
			\end{definition}
			
		\subsection{Zhu's algebra and its associated graded algebra}
			Zhu \cite{Z} associates with each graded vertex algebra $V$ an associative algebra $A(V)$, defined as follows.
			\begin{definition}
				Define two bilinear operations $*$ and $\circ$ on $V$ as follows. For $a$ homogeneous and $b\in V$, let
				\begin{align}
					\label{star}
						a*b=\Res_z\frac{(1+z)^{\deg a}}zY(a,z)b, \\
					\label{circle}
						a\circ b=\Res_z\frac{(1+z)^{\deg a}}{z^2}Y(a,z)b,
				\end{align}
				and extend them to $V\times V$ bilinearly. Let $A(V)=V/O(V)$, where
				$$O(V)=\spn\{a\circ b\mid a,b\in V\}.$$
				and denote by $[a]$ the image of $a$ under the natural projection $V\twoheadrightarrow A(V)$.
			\end{definition}
			
			The following theorem is due to \cite{Z} and \cite{DLM2}:
			\begin{theorem}\label{Zhu theory}
				Let $V$ be a vertex operator algebra. Then
				\begin{enumerate}[label=(\arabic*)]
					\item	$ A(V)$ is an associative algebra with unit $[\bm1]$, under the multiplication induced by bilinear operation $*$.
					\item	Let $M=\bigoplus\limits_{n=0}^\infty M_n$ be an admissible $V$-module with $M_0\neq0$. Then the linear map
						\begin{align*}
							o:V&\to\End M_0 \\
							a&\mapsto a_{(\deg a-1)}
						\end{align*}
						defines an $A(V)$-module structure on $M_0$.
					\item	The map $M\mapsto M_0$ gives a bijection 
							from the set of equivalent class of irreducible admissible $V$-modules, to the set of equivalent class of irreducible $A(V)$-modules.
					\item	If $V$ is rational, then $ A(V)$ is finite-dimensional and semisimple.
			\end{enumerate}\end{theorem}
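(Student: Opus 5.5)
The statement collects the results of \cite{Z} and \cite{DLM2}, and I would prove it in four stages, following the order of items (1)--(4). Throughout, the Borcherds identity \eqref{Borcherds}, applied with suitable rational functions $F(z,w)$, is the only tool needed for the algebraic identities.

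For (1), fix homogeneous $a$ and write $n=\deg a$. The first step is to show that $O(V)$ is a two-sided ideal for $*$.
\begin{itemize}
\item Using \eqref{covariance} and \eqref{hamiltonian}, one checks that $\Res_z\frac{(1+z)^{n}}{z^{2+k}}Y(a,z)b\in O(V)$ for all $k\ge0$.
\item From this, and from the identity
$$a*b-b*a\equiv\Res_z(1+z)^{n-1}Y(a,z)b \pmod{O(V)},$$
which comes from skew-symmetry, one gets $O(V)*V\subseteq O(V)$ and $V*O(V)\subseteq O(V)$.
\item For the right-ideal property, take $F(z,w)=\frac{(1+z)^{n}(1+w)^{\deg b}}{z\,w^2}$ and expand the iterate $\Res_{z-w}$ side of \eqref{Borcherds}. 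This expresses $a*(b\circ c)$ as a combination of terms $(a_{(j)}b)\circ c$.
\item Associativity modulo $O(V)$ follows in the same way, taking $F=\frac{(1+z)^{n}(1+w)^{\deg b}}{zw}$ and comparing $(a*b)*c$ with $a*(b*c)$.
\end{itemize}
For the unit, $\bm1*a=a$ holds directly. On the other side, $a\circ\bm1=Ta+na$ lies in $O(V)$, and $a*\bm1-a$ is a combination of such elements, so $[\bm1]$ is a two-sided unit.

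For (2), note that by \eqref{grading} the mode $o(a)=a_{(\deg a-1)}$ preserves $M_0$, while every mode of lower weight kills $M_0$ because $M$ is $\mathbb{N}$-graded.
\begin{itemize}
\item Applying the series form \eqref{series} with $(z-w)^m$ replaced by $\frac{(1+z-w)^{n}}{(z-w)^{2}}$ and taking the weight-zero part on $M_0$, the term in which annihilating modes act first vanishes. The remaining sum telescopes to $o(a\circ b)|_{M_0}=0$.
\item The same computation with $\frac{(1+z-w)^{n}}{z-w}$ gives $o(a*b)|_{M_0}=o(a)o(b)|_{M_0}$.
\end{itemize}
Together with $o(\bm1)=1$, this makes $M_0$ an $A(V)$-module.

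For (3), injectivity on irreducibles is straightforward. An irreducible admissible $M$ is generated by any nonzero vector of $M_0$, so $M_0$ is an irreducible $A(V)$-module. Moreover $M$ is the unique irreducible quotient of a universal object determined by $M_0$, so $M_0$ determines $M$ up to isomorphism.

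Surjectivity is the main obstacle. Given an $A(V)$-module $U$, form the Lie algebra
$$L(V)=V\otimes\mathbb{C}[t,t^{-1}]\big/\operatorname{im}\left(T\otimes1+\tfrac{d}{dt}\right),$$
graded by $\deg(a\otimes t^m)=\deg a-m-1$. Let $L(V)_{<0}$ act on $U$ by zero and $L(V)_0$ act through $a\otimes t^{\deg a-1}\mapsto o(a)$; the Lie algebra map $L(V)_0\to A(V)_{\mathrm{Lie}}$ is compatible with the bracket by (1). Induce to get $\bar M(U)$, then quotient by the relations imposing the Jacobi/associativity identity on generating fields; this gives a genuine admissible module whose degree-zero part is still $U$. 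This is the delicate step: one must show that imposing associativity does not kill $U$, which I would do as in \cite{DLM2}, by constructing the associativity relations via the universal enveloping algebra and checking their degree-zero component using (1). Finally, the quotient by the maximal graded submodule meeting $U$ trivially is irreducible when $U$ is.

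For (4), assume $V$ is rational. Every $A(V)$-module $U$ is the degree-zero part of an admissible $V$-module by the construction in (3). Complete reducibility of that module then transfers to $U$, so $A(V)$ is a semisimple algebra. Rationality also gives finitely many irreducible admissible modules, each with finite-dimensional $M_0$ \cite{DLM2}. Since $A(V)$ is cyclic, generated by $[\bm1]$, the regular module is a finite direct sum of these finite-dimensional simple modules. Hence $\dim A(V)<\infty$.
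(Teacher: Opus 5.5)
The paper does not prove this theorem; it quotes it from \cite{Z} and \cite{DLM2}. Your outline is the standard argument from those sources, and deferring the existence step of (3) to \cite{DLM2} matches what the paper does. Two steps, however, fail as written.

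\textbf{Associativity in (1).} The kernel you give cannot produce associativity. $F(z,w)=\frac{(1+z)^n(1+w)^{\deg b}}{zw}$ has no pole on $z=w$, so its expansion in $z-w$ has only nonnegative powers. The iterate side of the Borcherds identity then involves only $a_{(k)}b$ with $k\ge0$. It can never produce $(a*b)*c=\sum_i\binom{n}{i}(a_{(i-1)}b)*c$, which contains $a_{(-1)}b$. Its product side is $a*(b*c)-b*(a*c)$, so this $F$ gives a commutator relation instead. Take $F(z,w)=\frac{(1+z)^n(1+w)^{\deg b}}{(z-w)w}$. Writing $(1+z)^n=\sum_i\binom{n}{i}(1+w)^{n-i}(z-w)^i$ and using $\deg a_{(i-1)}b=n+\deg b-i$, the iterate side is exactly $(a*b)*c$. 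The product side is $a*(b*c)$ plus terms $\Res_z\frac{(1+z)^n}{z^{1+i}}Y(a,z)(\cdots)$ with $i\ge1$ and $\Res_w\frac{(1+w)^{\deg b}}{w^{2+i}}Y(b,w)(\cdots)$ with $i\ge0$, all in $O(V)$ by your first bullet. This is the case $m=n=-1$ of \eqref{associator-eq}.

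\textbf{The ideal property in (1).} Your third bullet does not express $a*(b\circ c)$ as a combination of $(a_{(j)}b)\circ c$. It gives $b\circ(a*c)$ plus terms $\Res_w\frac{(1+w)^{\deg x+j+1}}{w^{3+j}}Y(x,w)c$ with $x=a_{(i+j)}b$. These lie in $O(V)$ only after expanding $(1+w)^{j+1}$ and invoking the first bullet. The clean combination $b\circ(\cdots)+\sum_i\binom{n-1}{i}(a_{(i)}b)\circ c$ is what $\Res_z(1+z)^{n-1}Y(a,z)(b\circ c)$ equals. That fact, together with the commutator identity, is what you need to pass from $V*O(V)\subseteq O(V)$ to $O(V)*V\subseteq O(V)$.

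\textbf{Finite-dimensionality in (4).} Semisimplicity is handled correctly, but finite-dimensionality is not proved. Given semisimplicity and (3), the claim that every irreducible admissible module has finite-dimensional $M_0$ is equivalent to $\dim A(V)<\infty$. Citing it therefore assumes the conclusion. The missing argument is short:
\begin{enumerate}
\item $A(V)$ is semisimple Artinian (as you note, the regular module is a finite sum of simples), so $A(V)\cong\prod_{i=1}^kM_{n_i}(D_i)$ by Wedderburn--Artin.
\item If $\dim V_n<\infty$ for all $n$, then $A(V)$, and hence each division algebra $D_i$, has countable dimension over $\mathbb{C}$.
\item Such a $D_i$ must equal $\mathbb{C}$: an element $x\notin\mathbb{C}$ would be transcendental, and the elements $(x-\lambda)^{-1}$, $\lambda\in\mathbb{C}$, would be uncountably many and linearly independent.
\end{enumerate}

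This also shows that some finiteness hypothesis on $V$ is indispensable, and your proposal never uses one. The paper's definition of a VOA omits $\dim V_n<\infty$, and without it (4) is false. Take $V=\mathbb{C}(t)$ in degree $0$ with $Y(a,z)b=ab$ and $\omega=0$. Here $T=0$ forces $Y_M(a,z)=a_{(-1)}$, so admissible modules are just $\mathbb{N}$-graded $\mathbb{C}(t)$-vector spaces and $V$ is rational. Yet $O(V)=0$, so $A(V)=\mathbb{C}(t)$ is infinite-dimensional.
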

			
			\begin{remark}
				Note that the Virasoro element $\omega$ is not involved in the construction of Zhu's algebra. 
				In fact, the concept of Zhu's algebra can be generalized to $\mathbb{N}$-graded vertex algebras. 
				\Cref{grading-1,grading-2,Jacobi} are sufficient to show that $A(V)$ is an associative algebra. 
				Furthermore, \cite{BK} shows that 
					the representation-theoretic properties, i.e. \Cref{Zhu theory}(ii)(iii), also hold for $\mathbb{N}$-graded vertex algebras.
			\end{remark}
			
			We also consider the associated graded algebra $\gr A(V)$. Let
			$$V_{\leq n}=\bigoplus_{j\leq n}V_j$$
			and denote by $A_n(V)$ the image of $V_{\leq n}$ under the projection $V\twoheadrightarrow A(V)$. 
			It follows from \eqref{grading} that $A_n(V)$ defines a nice increasing filtration of $A(V)$ in the following sense:
			$$A_m(V)* A_n(V)\subseteq A_{m+n}(V).$$
			Therefore, the associated graded algebra
			$$\gr A(V)=\bigoplus_{n\in\mathbb{N}} A_n(V)/A_{n-1}(V)$$
			is a graded associative algebra with respect to the multiplication induced by $*$. 
			Moreover, the following lemma \cite[Lemma 2.1.3]{Z} implies that $\gr A(V)$ is commutative.
			\begin{lemma}
				Let $V$ be an $\mathbb{N}$-graded VOA. Then for $a\in V$ homogeneous and $b\in V$, we have
				$$a*b-b*a\equiv\Res_z(z+1)^{\deg a-1}Y(a,z)b\pmod{O(V)}.$$
				As an immediate consequence, for $a,b\in V$ homogeneous, we have
				\begin{equation*}
					a*b-b*a\in V_{\leq\deg a+\deg b-1}+O(V).
			\end{equation*}\end{lemma}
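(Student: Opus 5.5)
Both claims follow from a residue computation combined with the symmetry of vertex operators. The strategy is to show that $a*b - b*a$ equals the stated residue modulo $O(V)$, and then read off degrees using \eqref{grading}.

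\textbf{Step 1: express $b*a$ through $Y(a,z)b$.} First I would assume $a,b$ homogeneous and use skew-symmetry, $Y(b,z)a = e^{zT}Y(a,-z)b$. This follows from the Borcherds identity \eqref{Borcherds} together with \eqref{covariance}. Then
$$b*a=\Res_z\frac{(1+z)^{\deg b}}{z}e^{zT}Y(a,-z)b .$$
Next I would use the standard fact that $(T+H)c \equiv 0 \pmod{O(V)}$ for every $c$. This holds because $(T+H)c = c\circ\bm1$ up to a reindexing: indeed $\Res_z\frac{(1+z)^{\deg c}}{z^2}Y(c,z)\bm1 = Tc+(\deg c)c$. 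More generally, $O(V)$ is the span of the residues $\Res_z\frac{(1+z)^{\deg a}}{z^{2+m}}Y(a,z)b$ for $m\ge0$. Using this, the factor $e^{zT}$ acting on a vector of degree $d$ can be replaced modulo $O(V)$ by $(1+z)^{-H}$. This is the usual manipulation in Zhu's proof of \cite[Lemma 2.1.3]{Z}, which the paper cites.

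\textbf{Step 2: reduce to a single residue.} After Step 1, each term $a_{(n)}b$ has degree $\deg a+\deg b-n-1$, and after the substitution the powers of $(1+z)$ combine. Changing variables $z\mapsto -z/(1+z)$, i.e. $w=-z/(1+z)$ with $1+w=(1+z)^{-1}$, converts
$$\Res_z\frac{(1+z)^{\deg b}}{z}(1+z)^{-H}Y(a,-z)b$$
into
$$\Res_w\frac{(1+w)^{\deg a}}{w}Y(a,w)b\;-\;\Res_w(1+w)^{\deg a-1}Y(a,w)b .$$
The subtracted term comes from the Jacobian $dz/dw=-(1+w)^{-2}$ together with the difference $\frac{1}{w}(1+w)^{\deg a}\cdot\frac{1}{1+w}$ versus $\frac{1}{w}(1+w)^{\deg a}$. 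The first residue is exactly $a*b$, so we get
$$a*b-b*a\equiv\Res_z(1+z)^{\deg a-1}Y(a,z)b \pmod{O(V)}.$$
Bilinearity then extends this to arbitrary $b$.

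\textbf{Step 3: the degree consequence.} Expanding gives
$$\Res_z(1+z)^{\deg a-1}Y(a,z)b=\sum_{j\ge0}\binom{\deg a-1}{j}a_{(j)}b .$$
By \eqref{grading}, $a_{(j)}b\in V_{\deg a+\deg b-j-1}\subseteq V_{\le\deg a+\deg b-1}$ for every $j\ge0$, which gives the stated corollary.

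The main obstacle is Step 2: the change of variables must be justified as a formal residue identity, and the bookkeeping of which correction terms land in $O(V)$ must be done correctly. To do this I would first verify the identity $\Res_z\frac{(1+z)^{\deg a}}{z^{2+m}}Y(a,z)b\in O(V)$ by induction on $m$, using the operator $T+H$ as in Zhu's original argument.
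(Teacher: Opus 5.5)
Your proposal is correct, and it is essentially Zhu's original proof of \cite[Lemma 2.1.3]{Z}, which the paper cites rather than reproving. That argument uses skew-symmetry, then replaces $e^{zT}$ by $(1+z)^{-H}$ modulo $O(V)$ via $\Res_z\frac{(1+z)^{\deg c}}{z^{2+m}}Y(c,z)\bm1\in O(V)$ (the case $k\le-2$ of \Cref{grading-2}), and then substitutes $w=-z/(1+z)$; this gives $b*a\equiv\Res_w\frac{(1+w)^{\deg a-1}}{w}Y(a,w)b$, from which both the identity and the degree bound via \eqref{grading} follow as you describe.
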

			
		\subsection{Li filtration and its associated graded algebra}
			Each vertex algebra is canonically decreasingly filtered \cite{L}, as follows.
			
			\begin{definition}
				For a vertex algebra $V$, let
				\begin{equation}\label{filtration}
					F^pV=\spn\left\{a^1_{(-n_1-1)}\cdots a^r_{(-n_r-1)}\bm1\middle|n_i\geq0,n_1+\cdots+n_r\geq p\right\}.
				\end{equation}
				Then it is obvious that
				$$V=F^0V\supseteq F^1V\supseteq F^2V\supseteq\cdots.$$
				Let
				\begin{align*}
					R_V&=V/F^1V, \\
					\gr V&=\bigoplus_{p\in\mathbb{N}}F^pV/F^{p+1}V.
			\end{align*}\end{definition}
			
			The following theorem is due to \cite[Lemmas 2.10, 2.14, 4.1, 4.2]{L}:
			\begin{theorem}\label{grV}
				\begin{enumerate}[label=(\arabic*)]
					\item	If we take $F^pV=F^0V=V$ for $p<0$, then
						$$(F^pV)_{(k)}(F^qV)\subseteq F^{p+q-k-1}V\quad\text{for}\quad k\in\mathbb{Z}$$
						and
						$$(F^pV)_{(k)}(F^qV)\subseteq F^{p+q-k}V\quad\text{for}\quad k\in\mathbb{N}.$$
					\item	Let $(\gr V)_p=F^pV/F^{p+1}V$. Then $(a,b)\mapsto a_{(-1)}b$ induces a multiplication
						$$(\gr V)_p\times(\gr V)_q\to(\gr V)_{p+q}.$$
						This multiplication on $\gr V$ is commutative and associative.
					\item	$\gr V$ is a differential algebra generated by $R_V=(\gr V)_0$, under the multiplication defined in (2) and the derivation defined by
						\begin{align*}
							\partial:(\gr V)_p&\to(\gr V)_{p+1} \\
							a+F^{p+1}V&\mapsto Ta+F^{p+2}V.
						\end{align*}\end{enumerate}\end{theorem}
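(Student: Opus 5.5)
The approach is to work directly from the spanning set \eqref{filtration} and to reduce everything to the commutator formula \eqref{commutator}, the associator formula \eqref{associator}, and the translation identities \eqref{covariance}--\eqref{derivation}. I would first record two easy facts. The first is $TF^pV\subseteq F^{p+1}V$: by \eqref{derivation} and $T\bm1=0$, $T$ applied to a monomial is a sum of monomials in which one factor $a^i_{(-n_i-1)}$ is replaced by $(Ta^i)_{(-n_i-1)}=(n_i+1)a^i_{(-n_i-2)}$, and this raises the total $\sum n_i$ by one. The second is that, by \eqref{generating}, $a_{(-n-1)}=\frac1{n!}(T^na)_{(-1)}$. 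With these, parts (2) and (3) reduce quickly to part (1), so the bulk of the work is in (1).

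For (1), the case $k\in\mathbb N$ with $q$ arbitrary but $a\in F^0V=V$ comes first. Take a monomial $a^1_{(-n_1-1)}\cdots a^r_{(-n_r-1)}\bm1$ with $\sum n_i\geq q$ and move $a_{(k)}$ to the right using \eqref{commutator}, noting that $a_{(k)}\bm1=0$. Each resulting term replaces one factor $a^i_{(-n_i-1)}$ by $(a_{(j)}a^i)_{(k-n_i-1-j)}$ with $0\le j\le k$. When $k-j\le n_i$ this is a creation mode of order $n_i-k+j\ge n_i-k$, so the total order drops by at most $k$. When $k-j>n_i$ it is an annihilation mode, and I would handle it by induction on the length $r$; the claim with $q$ replaced by $q-k$ then follows. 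The case $k<0$ with $p=0$ is immediate: writing $a_{(k)}=a_{(-n-1)}$ with $n=-k-1$ simply prepends a factor of order $n$, giving $F^{q-k-1}V$.

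To get the dependence on $p$, I would induct on $p$, using that $F^pV$ is spanned by elements $b_{(-m-1)}c$ with $c\in F^{p-m}V$, which holds by the definition of $F^pV$. For such an element I expand $(b_{(-m-1)}c)_{(k)}$ via \eqref{associator}. This produces terms $b_{(-m-1-j)}c_{(k+j)}$ and $c_{(k-m-1-j)}b_{(j)}$. Applying the $p=0$ statements twice to each term, together with the induction hypothesis, should give exactly the bounds $p+q-k-1$ and $p+q-k$. I expect this step to be the main obstacle. The infinite-looking sum in \eqref{associator} is finite on any vector, but the bookkeeping of which modes are creation and which are annihilation, and the resulting index inequalities in each of the two cases ($k<0$ and $k\ge 0$), are delicate. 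I would try first to organize it as a double induction on $p$ and on the length of the monomial in the second slot.

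For (2), well-definedness of the product on $(\gr V)_p\times(\gr V)_q\to(\gr V)_{p+q}$ is (1) with $k=-1$. For commutativity I would use skew-symmetry, $a_{(-1)}b=\sum_{j\ge0}(-1)^{j+1}\frac{T^j}{j!}(b_{(j-1)}a)$. The $j=0$ term is $b_{(-1)}a$. For $j\ge1$, (1) gives $b_{(j-1)}a\in F^{p+q-j+1}V$, so $T^j$ of it lies in $F^{p+q+1}V$. For associativity, \eqref{associator} with $m=n=-1$ gives
$$(a_{(-1)}b)_{(-1)}c-a_{(-1)}b_{(-1)}c=\sum_{j\ge1}a_{(-1-j)}b_{(-1+j)}c+\sum_{j\ge0}b_{(-2-j)}a_{(j)}c,$$
and each term on the right lies in $F^{p+q+r+1}V$ by (1).

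For (3), $\partial$ is well defined because $TF^pV\subseteq F^{p+1}V$, and it is a derivation by \eqref{derivation}. Generation follows from $a^1_{(-n_1-1)}\cdots a^r_{(-n_r-1)}\bm1=\prod_i\frac1{n_i!}(T^{n_i}a^i)_{(-1)}\bm1$. Hence the class of any such monomial in $(\gr V)_{\sum n_i}$ is $\prod_i\frac1{n_i!}\partial^{n_i}\bar a^i$, with each $\bar a^i\in R_V$; any monomial with $\sum n_i>p$ already lies in $F^{p+1}V$ and so vanishes in $(\gr V)_p$.
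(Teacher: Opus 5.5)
\textbf{Correct parts.} The paper gives no proof of this theorem; it only cites Li. Your overall plan is the standard one: the commutator formula for $p=0$, the associator for general $p$, skew-symmetry for commutativity. Several steps you actually carry out are correct:
\begin{itemize}
\item $TF^pV\subseteq F^{p+1}V$;
\item the sharp bound $a_{(k)}F^qV\subseteq F^{q-k}V$ for $k\in\mathbb N$, by induction on the length of the monomial, uniformly in $a$ and $k$;
\item parts (2) and (3), assuming (1).
\end{itemize}
One slip: skew-symmetry reads $a_{(-1)}b=\sum_{j\ge0}(-1)^j\frac{T^j}{j!}b_{(j-1)}a$. With your sign $(-1)^{j+1}$ the $j=0$ term would be $-b_{(-1)}a$, contradicting your next sentence.

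\textbf{The gap.} The genuine gap is the passage to general $p$, which you leave at ``should give exactly the bounds.'' As you set it up, the induction on $p$ does not close. Your spanning set $\{b_{(-m-1)}c : c\in F^{p-m}V\}$ includes $m=0$, i.e.\ elements $b_{(-1)}c$ with $c\in F^pV$ itself. For these the associator produces $b_{(-1-j)}c_{(k+j)}v$ and $c_{(k-1-j)}b_{(j)}v$, and the hypothesis for levels $<p$ says nothing about them. An extra induction on the length of $v\in F^qV$ does not help, because $c_{(k+j)}v$ involves the same $v$. The extra induction must be on the first slot.

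\textbf{The fix.} Prove by induction on $r$ that a monomial $u=a^1_{(-n_1-1)}\cdots a^r_{(-n_r-1)}\bm1$ with $N=\sum n_i$ satisfies
\begin{itemize}
\item $u_{(k)}F^qV\subseteq F^{N+q-k-1}V$ for all $k\in\mathbb{Z}$;
\item $u_{(k)}F^qV\subseteq F^{N+q-k}V$ for $k\ge0$.
\end{itemize}
The base case is $u=\bm1$, where $\bm1_{(k)}=\delta_{k,-1}$. Write $u=b_{(-m-1)}c$ with $c$ of length $r-1$ and order $N-m$. Then
\[
u_{(k)}v=\sum_{j\ge0}\binom{m+j}{j}\Bigl(b_{(-m-1-j)}c_{(k+j)}v+(-1)^m c_{(k-m-1-j)}b_{(j)}v\Bigr).
\]
In the first term, the hypothesis for $c$ gives $c_{(k+j)}v\in F^{N-m+q-k-j-1}V$. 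This improves by one when $k+j\ge0$, hence for all $j$ if $k\ge0$. Applying $b_{(-m-1-j)}$ then raises the level by $m+j$.

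In the second term, $b_{(j)}v\in F^{q-j}V$, and the hypothesis for $c$ gives
\[
c_{(k-m-1-j)}b_{(j)}v\in F^{(N-m)+(q-j)-(k-m-1-j)-1}V=F^{N+q-k}V
\]
for every $k$. This is exactly where the sharp $k\in\mathbb N$ bound from your $p=0$ step is indispensable; the weaker bound would lose one level. Since $N\ge p$, this yields (1) with no induction on $p$ at all.

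Alternatively, you can keep the induction on $p$ after first proving $F^pV=\sum_{m\ge1}V_{(-m-1)}F^{p-m}V$ for $p\ge1$. However, that lemma itself needs a length induction, moving a positive-order factor to the front with the commutator formula.
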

			
			$R_V$ is related to Zhu's algebra via the following theorem \cite[Proposition 3.3]{ALY}.
			\begin{theorem}\label{epimorphism}
				The	following map is a well-defined surjective homomorphism of commutative algebras:
				\begin{align*}
					\eta_V:R_V&\to\gr A(V)\\
					a+F^1V&\mapsto a+V_{<\deg a}+O(V).
			\end{align*}\end{theorem}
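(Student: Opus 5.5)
The plan is to reduce everything to leading-term computations for $*$ and $\circ$ with respect to the grading, together with Zhu's basic facts about $O(V)$. For $a,b$ homogeneous, \eqref{grading} gives $a_{(j)}b\in V_{\deg a+\deg b-j-1}$. Expanding $(1+z)^{\deg a}$ in \eqref{star} gives
$$a*b=\sum_{j\ge0}\binom{\deg a}{j}a_{(j-1)}b\equiv a_{(-1)}b \pmod{V_{<\deg a+\deg b}}.$$
I also use Zhu's lemma \cite[Lemma 2.1.2]{Z}: for $m\ge0$,
$$\Res_z\frac{(1+z)^{\deg a}}{z^{2+m}}Y(a,z)b\in O(V).$$
This element equals $a_{(-2-m)}b$ plus terms of strictly smaller degree. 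Hence
$$a_{(-2-m)}b\in V_{<\deg a+\deg b+m+1}+O(V)\quad\text{for } m\ge0.$$
The last ingredient is that $O(V)$ is a left ideal for $*$: $a*O(V)\subseteq O(V)$, which is part of Zhu's proof of \Cref{Zhu theory}(1).

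\emph{Well-definedness.} Every spanning monomial in \eqref{filtration} is homogeneous when the $a^i$ are, so $F^1V$ is a graded subspace. It therefore suffices to show $F^1V\cap V_n\subseteq V_{<n}+O(V)$ for each $n$. Take a homogeneous monomial $c=a^1_{(-n_1-1)}\cdots a^r_{(-n_r-1)}\bm1$ of degree $n$ with $\sum n_i\ge1$, and induct on $r$. Write $c=a_{(-k-1)}d$ with $a=a^1$, $k=n_1$, and $d$ the remaining monomial.

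If $k\ge1$, the displayed consequence of Zhu's lemma gives $c\in V_{<n}+O(V)$ directly.

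If $k=0$, then $d\in F^1V$ is homogeneous of degree $n-\deg a$. By induction, $d=e+o$ with $e\in V_{<n-\deg a}$ and $o\in O(V)$. Then
$$c=a_{(-1)}d\equiv a*d\pmod{V_{<n}},\qquad a*d=a*e+a*o.$$
Here $a*e\in V_{<n}$ by the degree count, and $a*o\in O(V)$ by the left ideal property. Hence $c\in V_{<n}+O(V)$.

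It follows that $a\mapsto a+V_{<\deg a}+O(V)$ kills $F^1V$ degreewise. So $\eta_V$ is a well-defined graded linear map $R_V=\bigoplus_n V_n/(F^1V\cap V_n)\to\bigoplus_n A_n(V)/A_{n-1}(V)$.

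\emph{Surjectivity and multiplicativity.} Since $A_n(V)$ is the image of $V_{\le n}=V_n\oplus V_{\le n-1}$, the quotient $A_n(V)/A_{n-1}(V)$ is spanned by images of elements of $V_n$. Hence $\eta_V$ is surjective. By \Cref{grV}, the product on $R_V=(\gr V)_0$ is induced by $a_{(-1)}b$. The expansion of $a*b$ above shows that $[a]*[b]$ and $[a_{(-1)}b]$ agree modulo $A_{\deg a+\deg b-1}(V)$. Thus $\eta_V(a)\eta_V(b)=\eta_V(a_{(-1)}b)$. The unit $\bm1$ maps to $[\bm1]$, and commutativity of both sides is already known.

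The main obstacle is the well-definedness step. The issue is that $F^1V$ is defined via monomials whose "$F^1$-ness" may sit deep inside, not in the outermost mode. The induction handles this by peeling off the outermost mode and using that $O(V)$ is a left $*$-ideal, so it is the one place where genuine Zhu-algebra input, beyond degree counting, is required.
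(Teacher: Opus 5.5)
Your proof is correct. The paper does not prove this statement itself; it quotes it from \cite[Proposition 3.3]{ALY}, and the argument there takes a different route.

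In \cite{ALY} the $C_2$-algebra is $V/C_2(V)$ with $C_2(V)=\operatorname{span}\{a_{(-2)}b\}$. Well-definedness is then the single congruence $a\circ b\equiv a_{(-2)}b$ modulo $V_{<\deg a+\deg b+1}$. That argument needs neither the higher-pole lemma nor the ideal property of $O(V)$. To transfer it to the paper's definition $R_V=V/F^1V$, one uses Li's identification $F^1V=C_2(V)$. The paper uses this identification tacitly when it writes $F^1V\cap V_{\le n}=\operatorname{span}\{a_{(-2)}b\mid \deg a+\deg b<n\}$.

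You instead work directly from the monomial spanning set \eqref{filtration}. This trades Li's result for two Zhu-side inputs: \Cref{grading-2} (Zhu's higher-pole lemma) for the case $k\ge 1$, and $V*O(V)\subseteq O(V)$ (\Cref{grading-1}) for $k=0$. This is not really extra work. Proving $F^1V=C_2(V)$ requires the same peeling of the outermost mode, using \eqref{commutator} to show that $C_2(V)$ is stable under $a_{(-1)}$. You perform that peeling on the Zhu-algebra side instead of the vertex-algebra side.

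In fact, your two cases are exactly the two cases in the proof of \Cref{restatement of formulae}(iii). Combined with \Cref{restatement of formulae}(i), they give $\overline{P}\bm1\in V_{<\deg P}+O_{\deg P}$ for every monomial $P$ with $\dep P>0$. So your argument is the paper's own Section 4 machinery, and tracking the filtration yields the slightly sharper $F^1V\cap V_n\subseteq V_{<n}+O_n$.

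Your surjectivity and multiplicativity steps are the standard ones. You do not even need commutativity of $\gr A(V)$ as an input, since it follows from surjectivity and commutativity of $R_V$.
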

			
		\subsection{Classical freeness}
			In view of \Cref{grV}, it is natural to ask to what extent $\gr V$ is determined by $R_V$. 
			The simplest case, in which $\gr V$ is completely determined by $R_V$, is formally introduced in \cite{EH}.
			
			\begin{definition}
				A \emph{differential algebra} is a unital commutative $\mathbb{C}$-algebra $R$, together with a derivation $\partial:R\to R$.
				
				A map $f:(R,\partial)\to(R',\partial')$ is called a \emph{differential algebra homomorphism}, provided $f$ is an algebra homomorphism and
				$$\partial'f(x)=f(\partial x),\quad\forall x\in R.$$
			\end{definition}
			
			\begin{proposition}\label{jet algebra}
				For any unital commutative (not necessarily finite generated) $\mathbb{C}$-algebra $R$, 
					there exists a unique (up to an isomorphism) differential algebra $\mathcal{J}_\infty R$, 
					together with an algebra homomorphism $j:R\to\mathcal{J}_\infty R$, such that
				\begin{equation}\label{universal}
					j^*:\hom_{\mathbf{Diff.Alg}}(\mathcal{J}_\infty R,A)\to\hom_{\mathbf{Alg}}(R,A)
				\end{equation}
				is a bijection for any differential algebra $A$. The differential algebra $\mathcal{J}_\infty R$ is called the \emph{jet algebra} of $R$.
				\begin{proof}
					The uniqueness is due to Yoneda's lemma, hence it suffices to prove the existence. 
					Throughout the whole proof, we use $(A,\partial_A)$ to denote an arbitrary differential algebra.
					
					First consider the polynomial ring $\mathbb{C}[S]$, where $S$ is the set of indeterminates. 
					Let $JS=\{\partial^nx|n\geq0,x\in S\}$ and equip $\mathcal{J}_\infty\mathbb{C}[S]=\mathbb{C}[JS]$ with the derivation
					$$\partial:\partial^nx\mapsto\partial^{n+1}x,\quad n\geq0, x\in S.$$
					Then we have the natural inclusion
					\begin{align*}
						j:\mathbb{C}[S]&\hookrightarrow\mathcal{J}_\infty\mathbb{C}[S] \\
						x&\mapsto\partial^0x.
					\end{align*}
					Now we prove the condition \eqref{universal}. 
					Since $\mathcal{J}_\infty\mathbb{C}[S]$ is generated by $j(\mathbb{C}[S])$ as a differential algebra, $j^*$ is injective. 
					Conversely, let $\varphi:\mathbb{C}[S]\to A$ is an algebra homomorphism. Then
					\begin{align*}
						J_\varphi:\mathcal{J}_\infty\mathbb{C}[S]&\to A \\
						\partial^nx&\mapsto\partial_A^n\varphi(x)
					\end{align*}
					is obviously a well-defined differential algebra homomorphism and $j^*J_\varphi=\varphi$.
					
					Next let $R$ be a general algebra. Choose a generating set $S$, we can write
					$$R=\mathbb{C}[S]/I.$$
					Let $\mathcal{J}_\infty I=\langle\partial^nf|n\geq0,f\in I\rangle\subseteq\mathbb{C}[JS]$, 
					then $\mathcal{J}_\infty R=\mathbb{C}[JS]/\mathcal{J}_\infty I$ is a differential algebra, since $\mathcal{J}_\infty I$ is a differential ideal. 
					And we have the following commutative diagram
					\begin{center}\begin{codi}
						\obj {	|(S)|\mathbb{C}[S]		 && R \\
									|(JS)|\mathbb{C}[JS]	&& |(JR)|\mathcal{J}_\infty R \\};
						\mor S \pi :->> R \overline{j} :-> JR;
						\mor * j :-> JS \mathcal{J}_\infty\pi :->> *;
					\end{codi}\end{center}
					where $\mathcal{J}_\infty\pi$ is the natural projection. Now we prove the condition \eqref{universal}.
					\begin{center}\begin{codi}
						\obj {	|(S)|\hom_{\mathbf{Alg}}(\mathbb{C}[S],A)				&&& |(R)|\hom_{\mathbf{Alg}}(R,A) \\
									|(JS)|\hom_{\mathbf{Diff.Alg}}(\mathbb{C}[JS],A)	&&& |(JR)|\hom_{\mathbf{Diff.Alg}}(\mathcal{J}_\infty R,A) \\};
						\mor JR \overline{j}^* :-> R \pi^* :-> S;
						\mor * (\mathcal{J}_\infty\pi)^* :-> JS j^* :-> *;
					\end{codi}\end{center}
					
					In the diagram above, $j^*$ is injective as we have shown, and $(\mathcal{J}_\infty\pi)^*$ is injective since $\mathcal{J}_\infty\pi$ is surjective. 
					Hence
					$$\pi^*\circ\overline{j}^*=j^*\circ(\mathcal{J}_\infty\pi)^*$$
					is injective, which implies that $\overline{j}^*$ is injective.
					
					On the other hand, let $\varphi:R\to A$ is an algebra homomorphism. 
					Then the homomorphism $\pi^*\varphi:\mathbb{C}[S]\to A$ 
						is uniquely extended to a differential algebra homomorphism $J_{\pi^*\varphi}:\mathbb{C}[JS]\to A$.
					For any $f\in I$, we have
					$$J_{\pi^*\varphi}(\partial^nf)=\partial_A^n\varphi(\pi(f))=0.$$
					Thus $J_{\pi^*\varphi}$ induces a differential algebra homomorphism
					$$J_\varphi:\mathcal{J}_\infty R\to A.$$
					Now a routine diagram chasing shows that
					$$\pi^*\overline{j}^*J_\varphi=j^*(\mathcal{J}_\infty\pi)^*J_\varphi=j^*J_{\pi^*\varphi}=\pi^*\varphi.$$
					It follows that $\overline{j}^*J_\varphi=\varphi$.
			\end{proof}\end{proposition}
			
			By definition of jet algebra, for any differential algebra $A$ generated by $R\subseteq A$, there is a canonical projection
			\begin{align*}
				\pi:\mathcal{J}_\infty R&\twoheadrightarrow A \\
				x&\mapsto x,\quad x\in R.
			\end{align*}
			\begin{definition}
				A vertex algebra $V$ is called \emph{classically free} if the canonical projection
				$$\pi:\mathcal{J}_\infty R_V\twoheadrightarrow\gr V$$
				is an isomorphism.
			\end{definition}
			
			Let $V$ be strongly generated by $S\subseteq V$ (not necessarily finite). Then by \Cref{grV}, we have the natural projection of differential algebras
			\begin{equation}\label{natural projection}\begin{aligned}
				\varphi:\mathcal{J}_\infty\mathbb{C}[S]&\twoheadrightarrow\gr V \\
				\partial^nx&\mapsto T^nx\in(\gr V)_n\quad\text{for }x\in S
			\end{aligned}\end{equation}
			The surjective homomorphism $\varphi|_{\mathbb{C}[S]}:\mathbb{C}[S]\to R_V$ gives
			$$R_V\cong\mathbb{C}[S]/I$$
			Then according to construction in \Cref{jet algebra}, $V$ is classically free iff
			$$\gr V\cong\mathcal{J}_\infty\mathbb{C}[S]/\langle\partial^nf|n\geq0,f\in I\rangle$$
			This description is independent to the choice of strong generating set, including whether the strong generating set is finite or not. 
			This is important since we need a large generating set in \Cref{sec4}.
			
	\section{Operations \texorpdfstring{$\bullet_n$}{} and their properties}\label{sec3}
		In this section we define a family of auxiliary operations $\bullet_n$ and establish several of their basic properties. 
		The definition is inspired by operations \eqref{star}\eqref{circle} and has already appeared in \cite{BK}. 
		These properties suggest that, for $n<0$, the operations $\bullet_n$ behave analogously to the mode products $(a,b)\mapsto a_{(n)}b$.
		
		In this section, $V$ denote a $\mathbb{N}$-graded vertex algebra.
		
		\subsection{Definitions}
			\begin{definition}
				For each $n\in\mathbb{Z}$, we define a bilinear operation $\bullet_n$ on $V$ as follows. For $a$ homogeneous and $b\in V$, let
				$$a\bullet_nb=\Res_zz^n(1+z)^{\deg a}Y(a,z)b.$$
				In particular, we have $a*b=a\bullet_{-1}b$ and $a\circ b=a\bullet_{-2}b$.
				
				Analogously to \eqref{translation}, for $a\in V$ we define
				\begin{equation*}
					\widetilde{T}a=a\circ\bm1=(T+H)a.
				\end{equation*}\end{definition}
			
			\begin{lemma}\label{expansion}
				For every $a\in V$ homogeneous and $b\in V$, we have
				$$a\bullet_nb=\sum_{i=0}^{\deg a}\binom{\deg a}ia_{(n+i)}b.$$
				In particular, we have
				$$a\bullet_nb\equiv a_{(n)}b\pmod{V_{<\deg a+\deg b-n-1}}$$
				if $b$ is also homogeneous.
				\begin{proof}
					By expanding $(1+z)^{\deg a}$ we obtain
					\begin{equation*}
						a\bullet_nb=\sum_{i=0}^{\deg a}\binom{\deg a}i\Res_zz^{n+i}Y(a,z)b=\sum_{i=0}^{\deg a}\binom{\deg a}ia_{(n+i)}b.
						\qedhere\end{equation*}\end{proof}\end{lemma}
			
			\begin{lemma}\label{pseudo-covariance}
				For every $a,b\in V$ and $n\in\mathbb{Z}$, we have
				\begin{equation}\label{pseudo-covariance-eq}
					(\widetilde{T}a)\bullet_nb=-(n+1)a\bullet_nb-na\bullet_{n-1}b
				\end{equation}
				\begin{proof}
					Since \eqref{pseudo-covariance-eq} is linear in $a$, we can assume that $a$ is homogeneous. 
					$\deg Ta=\deg a+1$ due to \eqref{covariance}. Then applying \eqref{covariance} yields
					\begin{align*}
						Ta\bullet_nb&=\Res_zz^n(1+z)^{\deg a+1}\frac\partial{\partial z}Y(a,z)b \\
						&=-\Res_z\frac\partial{\partial z}\left(z^n(1+z)^{\deg a+1}\right)Y(a,z)b \\
						&=-\Res_z\left((n+\deg a+1)z^n+nz^{n-1}\right)(1+z)^{\deg a}Y(a,z)b \\
						&=-(n+\deg a+1)a\bullet_nb-na\bullet_{n-1}b.
						\qedhere\end{align*}\end{proof}\end{lemma}
			
			\begin{definition}
				Let $O_n=\spn\{a\circ b\mid\deg a+\deg b<n\}\subseteq O(V).$
			\end{definition}
			
			The increasing filtration $O_n$ of $O(V)$ is an analogue to the increasing filtration $F^1V\cap V_{\leq n}$ of $F^1V$, noting that
			$$F^1V\cap V_{\leq n}=\spn\{a_{(-2)}b|\deg a+\deg b<n\}.$$
			We use this filtration to study $\gr A(V)$, by considering $V_{\leq n}/O_{n-1}$.
			
		\subsection{Basic properties of \texorpdfstring{$\bullet_n$}{}}
			\begin{proposition}\label{grading-2}
				For $k\leq-2$, we have $V_m\bullet_kV_n\subseteq O_{m+n-k-1}$.
				\begin{proof}
					We prove the proposition by decreasing induction on $k$. The base case $k=-2$ is immediate from the definition.
					Now, let $a\in V_m$ and $b\in V_n$. Then by \Cref{pseudo-covariance}, we have
					\begin{align*}
						a\bullet_{k-1}b&=-\frac1kTa\bullet_kb-\frac1k(k+\deg a+1)a\bullet_kb \\
						&\in O_{(m+1)+n-k-1}+O_{m+n-k-1} \\
						&=O_{m+n-(k-1)-1}.
					\qedhere\end{align*}\end{proof}\end{proposition}
			
			\begin{proposition}\label{Jacobi}
				Let $a,b,c\in V$ be homogeneous and
				$$d=\deg a_{(m)}b_{(n)}c-1=\deg a+\deg b+\deg c-m-n-3.$$
				Then the following identities hold:
				\begin{enumerate}[label=(\roman*)]
					\item	When $m+n\leq-3$, we have
						\begin{equation}\label{commutator-eq}
							a\bullet_m(b\bullet_nc)-b\bullet_n(a\bullet_mc)\equiv\sum_{j=0}^\infty\binom{m}{j}(a_{(j)}b)\bullet_{m+n-j}c\pmod{O_d};
						\end{equation}
						When $m=n=-1$, we have
						\begin{equation}\label{commutator-1-eq}\begin{aligned}
							a*(b*c)-b*(a*c)\equiv\sum_{j=0}^\infty\binom{-1}{j}&(a_{(j)}b)\bullet_{-2-j}c \\
							&\pmod{O_d+V_{<\deg a+\deg b}*c};
						\end{aligned}\end{equation}
					\item	When $m,n<0$, we have
						\begin{equation}\label{associator-eq}\begin{aligned}
							(a\bullet_mb)\bullet_nc\equiv\sum_{j=0}^\infty(-1)^j&\binom{m}{j}\Big(a\bullet_{m-j}(b\bullet_{n+j}c) \\
							&-(-1)^mb\bullet_{m+n-j}(a\bullet_jc)\Big)\pmod{O_d}.
						\end{aligned}\end{equation}
				\end{enumerate}
			\begin{proof}\begin{enumerate}[label=(\roman*)]
				\item	By definition we have
					\begin{align}
					\notag
						&a\bullet_m(b\bullet_nc)-b\bullet_n(a\bullet_mc) \\
					\label{def of commutator-eq}
						=&\Res_z\Res_wz^m(1+z)^{\deg a}w^n(1+w)^{\deg b}[Y(a,z),Y(b,w)]c.
					\end{align}
					Applying Borcherds identity \eqref{Borcherds} to \eqref{def of commutator-eq}, we obtain

					\begin{align*}
						&a\bullet_m(b\bullet_nc)-b\bullet_n(a\bullet_mc) \\
						=&\Res_w\Res_{z-w}z^m(1+z)^{\deg a}w^n(1+w)^{\deg b}Y(Y(a,z-w)b,w)c \\
						=&\sum_{j=0}^\infty\sum_{i=0}^{\deg a}\binom{\deg a}i\binom mj\Res_ww^{m+n-j}(1+w)^{\deg a+\deg b-i} \\
						&\qquad\cdot\Res_{z-w}(z-w)^{i+j}Y(Y(a,z-w)b,w)c \\
						=&\sum_{j=0}^\infty\sum_{i=0}^{\deg a}\binom{\deg a}i\binom mj\Res_ww^{m+n-j}(1+w)^{j+1} \\
						&\qquad\cdot(1+w)^{\deg a+\deg b-i-j-1}Y(a_{(i+j)}b,w)c \\
						=&\sum_{j=0}^\infty\sum_{i=0}^{\deg a}\sum_{k=0}^{j+1}\binom{\deg a}i\binom mj\binom{j+1} k(a_{(i+j)}b)\bullet_{m+n-k+1}c.
					\end{align*}
					By \Cref{grading-2}, the terms with $m+n-k+1\leq-2$ and $i+j\geq k$ lie in $O_d$ and thus be omitted. 
					We now check the remaining terms:
					\begin{itemize}
						\item	When $m+n\leq-3$: the remaining terms correspond to $i=0$ and $k=j+1$, and \eqref{commutator-eq} follows.
						\item	When $m=n=-1$: the remaining terms correspond to $i=0$ and $k=j+1$, together with the terms with $k=0$. 
							Note that $a_{(i+j)}b\in V_{<\deg a+\deg b}$, the sum of terms with $k=0$ is
							$$\sum_{j=0}^\infty\sum_{i=0}^{\deg a}\binom{\deg a}i\binom{-1}j(a_{(i+j)}b)*c\in V_{<\deg a+\deg b}*c.$$
							Hence \eqref{commutator-1-eq} follows modulo $V_{<\deg a+\deg b}*c$ additionally.
					\end{itemize}
				\item	By \Cref{expansion}, we have
					\begin{align}
					\notag
						&(a\bullet_mb)\bullet_nc \\
					\notag
						=&\Res_w\sum_{i=0}^{\deg a}\binom{\deg a}iw^n(1+w)^{\deg a+\deg b-m-i-1}Y(a_{(m+i)}b,w)c \\
					\notag
						=&\sum_{i=0}^{\deg a}\binom{\deg a}i\Res_w\Big(w^n(1+w)^{\deg b-m-1} \\
					\notag
						&\qquad\cdot(1+w)^{\deg a-i}\Res_{z-w}(z-w)^{m+i}Y(Y(a,z-w)b,w)c\Big) \\
					\label{def of associator-eq}
						=&\Res_w\Res_{z-w}\Big((1+z)^{\deg a}w^n(1+w)^{\deg b-m-1}(z-w)^m \\
					\notag
						&\hspace{16em}\cdot Y(Y(a,z-w)b,w)c\Big).
					\end{align}
					As in part (i), applying Borcherds identity \eqref{Borcherds} to \eqref{def of associator-eq} gives
					\begin{align*}
						&(a\bullet_mb)\bullet_nc \\
						=&\sum_{j=0}^\infty\Res_z\Res_w(-1)^j\binom mjz^{m-j}w^{n+j}(1+w)^{-m-1} \\
						&\qquad\cdot(1+z)^{\deg a}(1+w)^{\deg b}Y(a,z)Y(b,w)c \\
						&\quad-(-1)^m\sum_{j=0}^\infty\Res_w\Res_z(-1)^j\binom mjz^jw^{m+n-j}(1+w)^{-m-1} \\
						&\qquad\cdot(1+z)^{\deg a}(1+w)^{\deg b}Y(b,w)Y(a,z)c \\
						=&\sum_{k=0}^{-m-1}\binom{-m-1}k\sum_{j=0}^\infty(-1)^j\binom mj \\
						&\qquad\cdot\left(a\bullet_{m-j}(b\bullet_{n+j+k}c)-(-1)^mb\bullet_{m+n+k-j}(a\bullet_jc)\right).
					\end{align*}
					Since $m+k\leq-1$ and $n<0$, we have $m+n+k-j\leq-2$. On the other hand, if $k>0$, then $m\leq-2$ and thus $m-j\leq-2$.
					
					Now by \Cref{grading-2}, the terms with $k>0$ lie in $O_d$. 
					The remaining terms, which correspond to $k=0$, are exactly the right-hand side of \eqref{associator-eq}.
				\qedhere\end{enumerate}\end{proof}\end{proposition}
			
			\begin{proposition}\label{Zhu-derivation}\begin{enumerate}[label=(\roman*)]
				\item	For $a,b\in V$ homogeneous and $n\geq0$, we have
					$$a\bullet_{-1-n}b\equiv\frac1{n!}(\widetilde{T}^na)*b\pmod{O_{\deg a+\deg b+n-1}}.$$
				\item	For every $a,b\in V$ and $n\in\mathbb{Z}$, we have
					\begin{equation}\label{derivation-eq}
						\widetilde{T}(a\bullet_nb)=(\widetilde{T}a)\bullet_nb+a\bullet_n\widetilde{T}b.
					\end{equation}\end{enumerate}
			\begin{proof}\begin{enumerate}[label=(\roman*)]
				\item	The case $n=0$ is the identity $a*b\equiv a*b$. And by \Cref{pseudo-covariance},
					$$(\widetilde{T}a)\bullet_{-1-n}b=na\bullet_{-1-n}b+(n+1)a\bullet_{-2-n}b.$$
					Applying \Cref{grading-2}, we obtain
					$$a\bullet_{-2-n}b\equiv\frac1{n+1}(\widetilde{T}a)\bullet_{-1-n}b\pmod{O_{\deg a+\deg b+n}}.$$
					Now the induction hypothesis applies.
				\item	Since \eqref{derivation-eq} is linear in $a$, we can assume the $a$ is homogeneous. 
					By \eqref{covariance} and \eqref{hamiltonian}, we have
					$$[T+H,Y(a,z)]=(1+z)Y(Ta,z)+Y(Ha,z).$$
					It follows that
					\begin{align*}
						&\widetilde{T}(a\bullet_nb)-a\bullet_n\widetilde{T}b \\
						=&\Res_zz^n(1+z)^{\deg a}[T+H,Y(a,z)]b \\
						=&\Res_zz^n(1+z)^{\deg a+1}Y(Ta,z)b+\Res_zz^n(1+z)^{\deg a}Y(Ha,z)b \\
						=&(\widetilde{T}a)\bullet_nb.
					\qedhere\end{align*}\end{enumerate}\end{proof}\end{proposition}
			
			\begin{proposition}\label{grading-1}
				$V_m*O_n\subseteq O_{m+n}$ and $O_n*V_m\subseteq O_{m+n}$ for $m,n\in\mathbb{Z}$.
				\begin{proof}
					Let $\deg a=m$ and $\deg b+\deg c<n$. Then by \Cref{Jacobi},
					\begin{align*}
						a*(b\circ c)&\equiv b\circ(a*c)+\sum_{j=0}^\infty\binom{-1}j(a_{(j)}b)\bullet_{-3-j}c\pmod{O_{m+n-1}}, \\
						(b\circ c)*a&\equiv\sum_{j=0}^\infty(-1)^j\binom{-2}j\left(b\bullet_{-2-j}(c\bullet_{j-1}a)-c\bullet_{-3-j}(b\bullet_ja)\right) \\
						&\hspace{17em}\pmod{O_{m+n-1}}.
					\end{align*}
					By \Cref{grading-2}, each term on the right-hand side lies in $O_{m+n}$, which completes the proof.
			\end{proof}\end{proposition}
			
			We have now established analogues of 
				\eqref{commutator}, \eqref{associator}, \eqref{translation}, \eqref{generating}, \eqref{derivation} and \eqref{shifting}. 
			Finally, we state a corollary from \Cref{grading-1}. \Cref{commutator-1} supplements the result in \Cref{Jacobi}(i). 
			It will be used in the next section.
			
			\begin{corollary}\label{commutator-1}
				When $c\in O_n$, for $a,b$ homogeneous we have
				\begin{align*}
					a*(b*c)-b*(a*c)\equiv\sum_{j=0}^\infty\binom{-1}j&(a_{(j)}b)\bullet_{-2-j}c \\
					&\pmod{O_{\deg a+\deg b+n-1}}.
				\end{align*}
			\begin{proof}
				By \Cref{grading-1}, we have $V_{<\deg a+\deg b}*c\subseteq O_{\deg a+\deg b+n-1}$. Now the result follows directly from \eqref{commutator-1-eq}.
			\end{proof}\end{corollary}
			
	\section{Proof of the main theorem}\label{sec4}
		In this section, $V$ denote an $\mathbb{N}$-graded vertex algebra.
		\subsection{Notations}
			In view of properties established in \Cref{sec3}, it is natural to associate a finite sum
			\begin{equation}\label{mode-product}
				\sum_jc_ja^{j,1}_{(-1-n_{j,1})}\cdots a^{j,s_j}_{(-1-n_{j,s_j})}\bm1
			\end{equation}
			with the corresponding one, by replacing mode products $_{(n)}$ with $\bullet_n$:
			\begin{equation}\label{Zhu-product}
				\sum_jc_ja^{j,1}\bullet_{-1-n_{j,1}}\cdots a^{j,s_j}\bullet_{-1-n_{j,s_j}}\bm1.
			\end{equation}
			However, since different expressions may be the same element in $V$, 
				the correspondence between \eqref{mode-product} and \eqref{Zhu-product} can only be formulated at the level of formal expressions, 
				rather than elements of $V$.
			We now introduce a formal algebra of expressions that allows us to describe this correspondence systematically.
			\begin{definition}\label{algebra of expressions}
				We define the set (without linear relations)
				$$S=\{a\mid a\in V\text{ is homogeneous}\}$$
				and as in \Cref{jet algebra}, let
				$$JS=\{\partial^na\mid a\in S\}$$
				Let $JS^*$ denote the free monoid generated by $JS$, and $\mathbb{C}[JS^*]$ be the associated monoid algebra. 
				The derivation $\partial$ acts on $\mathbb{C}[JS]$ and $\mathbb{C}[JS^*]$ naturally:
				$$\partial(\partial^na)=\partial^{n+1}a.$$
				We bi-grade $\mathbb{C}[JS]$ and $\mathbb{C}[JS^*]$ by
				\begin{align*}
					\deg\partial^na&=\deg a+n, \\
					\dep\partial^na&=n,
				\end{align*}
				and let $\ldep$ be the minimal $\dep$-degree.
				
				Now, we define two $\mathbb{C}[JS^*]$-actions on $V$, by
				\begin{equation}\label{mode-action}
					(\partial^na)b=(T^na)_{-1}b=n!a_{(-1-n)}b
				\end{equation}
				and
				\begin{equation}\label{Zhu-action}
					(\partial^na)b=n!a\bullet_{-1-n}b
				\end{equation}
				respectively.
				For each expression $P\in\mathbb{C}[JS^*]$, we denote the action of $P$ induced from \eqref{mode-action} by $\overline{P}$, 
					and the action of $P$ induced from \eqref{Zhu-action} by $\widetilde{P}$.
				
				Finally, note that the polynomial ring $\mathbb{C}[JS]$ is exactly the abelianization of $\mathbb{C}[JS^*]$, we let
				\begin{align*}
					\langle-\rangle:\mathbb{C}[JS^*]&\twoheadrightarrow\mathbb{C}[JS] \\
					P&\mapsto\langle P\rangle
				\end{align*}
				be the natural projection.
			\end{definition}
			
			In our construction, $S$ is a large strong generating set (all homogenous elements) for $V$, 
				$\mathbb{C}[JS]$ is the jet algebra of polynomial ring $\mathbb{C}[S]$, 
				and $\mathbb{C}[JS^*]$ describes expressions generated by $\{a_{(-1-n)}\}$ or $\{a\bullet_{-1-n}\}$. 
			Indeed, now \eqref{mode-product} and \eqref{Zhu-product} are exactly $\overline{P}\bm1$ and $\widetilde{P}\bm1$, 
				which are implicitly related via expression $P$.
			
			The following two lemmas are translated from \Cref{expansion}, \Cref{grading-2}, \Cref{Zhu-derivation} and \Cref{grading-1}.
			\begin{lemma}\label{restatement of formulae}
				Let $P\in\mathbb{C}[JS^*]$ be $\deg$-homogeneous, and $b\in V_{\leq n}$. Then
				\begin{enumerate}[label=(\roman*)]
					\item	$\widetilde{P}b\equiv\overline{P}b\pmod{V_{<\deg P+n}}$,
					\item	If $b\in O_n$, then $\widetilde{P}b\in O_{\deg P+n}$,
					\item	If $\ldep P>0$, then $\widetilde{P}b\in O_{\deg P+n}$.
				\end{enumerate}
			\begin{proof}
				Without loss of generality, we assume that $P$ is a monomial:
				$$P=(\partial^{n_1}a^1)(\partial^{n_2}a^2)\cdots(\partial^{n_s}a^s)$$
				and argue by induction on $s$. Let $P_1=\partial^{n_1}a^1$ and $P=P_1P'$.
				\begin{enumerate}[label=(\roman*)]
					\item	The case $s=1$ is exactly \Cref{expansion}. If $s>1$, then
						$$\widetilde{P'}b\equiv\overline{P'}b\pmod{V_{<\deg P'+n}}$$
						follows from the induction hypothesis, and hence
						$$\widetilde{P}b=\widetilde{P_1}\widetilde{P'}b\equiv \overline{P_1}\widetilde{P'}b
							\equiv\overline{P_1}\,\overline{P'}b=\overline{P}b\pmod{V_{<\deg P+n}}.$$
					\item	The case $s=1$ is exactly \Cref{grading-1} (for $n_1=0$) and \Cref{grading-2} (for $n_1>0$). If $s>1$, then
						$$\widetilde{P'}b\in O_{\deg P'+n}$$
						follows from the induction hypothesis, and hence
						$$\widetilde{P}b\in\widetilde{P_1}O_{\deg P'+n}\subseteq O_{\deg P+n}.$$
					\item	If $n_1>0$ (this includes the case $s=1$), then by \Cref{grading-2},
						$$\widetilde{P}b\in n_1!a^1\bullet_{-1-n_1}V_{\leq\deg P'+n}\in O_{\deg P+n}.$$
						Otherwise $s>0$ and $\dep P'>0$, hence
						$$\widetilde{P'}b\in O_{\deg P'+n}$$
						by the induction hypothesis. Now applying \Cref{grading-1} yields
						\begin{equation*}
							\widetilde{P}b\in a^1*O_{\deg P'+n}\subseteq O_{\deg P+n}.
						\qedhere\end{equation*}\end{enumerate}\end{proof}\end{lemma}
			
			\begin{lemma}\label{restatement of derivation}
				Let $P\in\mathbb{C}[JS^*]$ be $\deg$-homogeneous. Then
				\begin{enumerate}[label=(\roman*)]
					\item	$T\overline{P}=\overline{\partial P}+\overline{P}T$,
					\item	$\widetilde{T}\widetilde{P}b\equiv\widetilde{\partial P}b+\widetilde{P}\widetilde{T}b\pmod{O_{\deg P+n}}$ for $b\in V_{\leq n}$.\\
						In particular, we have $\widetilde{T}\widetilde{P}\bm1\equiv\widetilde{\partial P}\bm1\pmod{O_{\deg P}}$.
				\end{enumerate}
			\begin{proof}
				Without loss of generality, we assume that $P$ is a monomial:
				$$P=(\partial^{n_1}a^1)(\partial^{n_2}a^2)\cdots(\partial^{n_s}a^s)$$
				and argue by induction on $s$. Let $P_1=\partial^{n_1}a^1$ and $P=P_1P'$.
				\begin{enumerate}[label=(\roman*)]
					\item	The case $s=1$ follows immediately from \eqref{covariance}:
						$$[T,\overline{P}]=n_1![T,a^1_{(-1-n_1)}]=(n_1+1)!a^1_{(-2-n_1)}=\overline{\partial P}.$$
						If $s>1$, by applying the induction hypothesis, we obtain
						$$[T,\overline{P}]=\overline{\partial P_1}\,\overline{P'}+\overline{P_1}\,\overline{\partial P'}=\overline{\partial P}.$$
					\item	If $s=1$, then by \Cref{Zhu-derivation},
						\begin{align*}
							[\widetilde{T},\widetilde{P}]b&=n_1!(\widetilde{T}a^1)\bullet_{-1-n_1}b \\
							&\equiv(\widetilde{T}^{n_1}\widetilde{T}a^1)*b&\pmod{O_{\deg P+n}} \\
							&\equiv(n_1+1)!a^1\bullet_{-2-n_1}b&\pmod{O_{\deg P+n}} \\
							&=\widetilde{\partial P}b.
						\end{align*}
						If $s>1$, then the induction hypothesis implies that
						\begin{equation}\label{2nd part}
							[\widetilde{T},\widetilde{P'}]b\equiv\widetilde{\partial P'}b\pmod{O_{\deg P'+n}}.
						\end{equation}
						Acting $\widetilde{P_1}$ on \eqref{2nd part} and applying \Cref{restatement of formulae}(ii), we obtain
						\begin{equation*}
							[\widetilde{T},\widetilde{P}]b\equiv\widetilde{\partial P_1}\widetilde{P'}b+\widetilde{P_1}\widetilde{\partial P'}b
								=\widetilde{\partial P}b\pmod{O_{\deg P+n}}.
						\qedhere\end{equation*}\end{enumerate}\end{proof}\end{lemma}
			
		\subsection{Comparison between \texorpdfstring{$\overline{P}$}{} and \texorpdfstring{$\widetilde{P}$}{}}
			\begin{lemma}\label{deepen}
				Let $P\in\mathbb{C}[JS^*]$ be an expression such that $\deg P=d,\dep P>0$ and $\left<P\right>=0$, 
					then there exists $P'\in\mathbb{C}[JS^*]$ satisfying the following properties:
				\begin{enumerate}[label=(\arabic*)]\label{condition list}
					\item	$\overline{P}\bm1=\overline{P'}\bm1$,
					\item	$\widetilde{P}\bm1\equiv\widetilde{P'}\bm1\pmod{O_{d-1}}$,
					\item	$\deg P'=d$,
					\item	$\ldep P'>\dep P$.
				\end{enumerate}
				\begin{proof}
					$P$ lies in the commutator ideal of $\mathbb{C}[JS^*]$ since $\left<P\right>=0$. 
					Hence it suffices to consider $P$ of the following form, where $P_1,P_2$ are monomials:
					$$P=P_1\left[\partial^ma,\partial^nb\right]P_2.$$
					By Borcherds identity \eqref{commutator}, we have
					\begin{align}
					\notag
						\overline{P}&=m!n!\overline{P_1}[a_{(-1-m)},b_{(-1-n)}]\overline{P_2} \\
					\label{mode-commutator}
						&=m!n!\overline{P_1}\sum_{j=0}^\infty\binom{-1-m}{j}(a_{(j)}b)_{(-2-m-n-j)}\overline{P_2}.
					\end{align}
					On the other hand, since
					$$0<\dep P=\dep P_1+m+n+\dep P_2,$$ 
					at least one of the following conditions holds:
					\begin{itemize}
						\item	If $\dep P_1>0$: By \Cref{restatement of formulae}(i), we have
							\begin{align}
							\notag
								&\widetilde{[\partial^ma,\partial^nb]}\widetilde{P_2}\bm1 \\
							\notag
								\equiv&\overline{[\partial^ma,\partial^nb]P_2}\bm1&\pmod{V_{<d-\deg P_1}} \\
							\notag
								=&m!n!\sum_{j=0}^\infty\binom{-1-m}{j}(a_{(j)}b)_{(-2-m-n-j)}\overline{P_2}\bm1 \\
							\label{apply Borcherds}
								\equiv&m!n!\sum_{j=0}^\infty\binom{-1-m}{j}(a_{(j)}b)\bullet_{-2-m-n-j}\widetilde{P_2}\bm1&\pmod{V_{<d-\deg P_1}}.
							\end{align}
							According to \Cref{restatement of formulae}(iii), acting $\widetilde{P_1}$ on \eqref{apply Borcherds} yields
							\begin{equation}\label{Zhu-commutator}
								\widetilde{P}\bm1
									\equiv m!n!\widetilde{P_1}\sum_{j=0}^\infty\binom{-1-m}{j}(a_{(j)}b)\bullet_{-2-m-n-j}\widetilde{P_2}\bm1\pmod{O_{d-1}}.
							\end{equation}
						\item	If $m+n>0$, then applying \eqref{commutator-eq} gives
							\begin{equation}\label{absorb}\begin{aligned}
								\widetilde{[\partial^ma,\partial^nb]}\widetilde{P_2}\bm1
									\equiv m!n!\sum_{j=0}^\infty\binom{-1-m}{j}(a_{(j)}b)&\bullet_{-2-m-n-j}\widetilde{P_2}\bm1 \\
								&\pmod{O_{d-\deg P_1-1}}.
							\end{aligned}\end{equation}
						\item	If $\dep P_2>0$, then $\widetilde{P_2}\bm1\in O_{\deg P_2}$ due to \Cref{restatement of formulae}(iii). 
							Then by \Cref{grading-1} we obtain \eqref{absorb} again.
					\end{itemize}
					According to \Cref{restatement of formulae}(ii), 
						by acting $\widetilde{P_1}$ on \eqref{absorb}, we obtain \eqref{Zhu-commutator} in the last two cases. 
					Note that $a_{(j)}b=0$ for $j\geq\deg a+\deg b$, 
						hence by comparing \eqref{mode-commutator} and \eqref{Zhu-commutator}, we see the expression
					$$P'=P_1\sum_{j=0}^{\deg a+\deg b}\sum_{a_{(j)}b\neq0}\binom{-1-m}{j}\frac{m!n!}{(m+n+j+1)!}(\partial^{m+n+j+1}(a_{j}b))P_2$$
					satisfies conditions \hyperref[condition list]{(1)(2)}. 
					Condition \hyperref[condition list]{(3)} holds because Borcherds identity \eqref{commutator} is homogeneous. 
					Finally, condition \hyperref[condition list]{(4)} is easy to check:
					\begin{equation*}
						\ldep P'\geq\dep P_1+m+n+1+\dep P_2=\dep P+1.
					\qedhere\end{equation*}\end{proof}\end{lemma}
			
			\begin{lemma}\label{classical freeness restatement}
				Let $V$ be classically free. If an expression $P\in\mathbb{C}[JS^*]$ satisfies
				
				\begin{enumerate*}[label=(\arabic*), itemjoin=\qquad]
					\item	$\dep P=l$, and
					\item	$\overline{P}\bm1\in F^{l+1}V$,
				\end{enumerate*}\\
				then there exists a finite family of $n_i\geq0$ and $Q_i,G_i\in\mathbb{C}[JS^*]$, with $\dep G_i=0$ and $\overline{G_i}\bm1\in F^1V$, such that
				\begin{equation}\label{generator}
					\langle P\rangle=\sum_i\langle Q_i\rangle\partial^{n_i}\langle G_i\rangle.
				\end{equation}
				\begin{proof}
					Consider the map
					\begin{align*}
						\varphi:\mathbb{C}[JS]&\to\gr V \\
						\langle P\rangle&\mapsto\overline{P}\bm1+F^{l+1}V\in(\gr V)_l,\quad l=\dep P.
					\end{align*}
					By \eqref{mode-action}, $\varphi$ is exactly the natural projection \eqref{natural projection}.
					
					According to the definition, we have
					\begin{equation}\label{ker from map}
						(\ker\varphi)_{\dep=l}=\{\langle P\rangle|\dep P=l,\overline{P}\bm1\in F^{l+1}V\}.
					\end{equation}
					Let $I=\ker\varphi|_{\mathbb{C}[S]}$ and $\mathcal{I}\subseteq\mathbb{C}[JS]$ be the differential ideal generated by $I$. 
					Then we have the induced homomorphism
					$$\overline{\varphi}:\mathbb{C}[JS]/\mathcal{I}\to\gr V=\mathcal{J}_\infty R_V,$$
					which is an isomorphism at $\dep=0$ component:
					$$\overline{\varphi}_{\dep=0}:\mathbb{C}[S]/I\xrightarrow{\sim}R_V.$$
					Meanwhile, according to the construction in \Cref{jet algebra}, 
					$\mathbb{C}[JS]/\mathcal{I}$ is exactly the jet algebra $\mathcal{J}_\infty(\mathbb{C}[S]/I)$.
					Hence $\overline{\varphi}$ is an isomorphism by universal property of jet algebra. It follows that $\ker\varphi=\mathcal{I}$, and thus
					\begin{equation}\label{ker from generator}
						(\ker\varphi)_{\dep=l}=\spn\{\langle Q\rangle\partial^n\langle G\rangle|\dep Q+n=l,\langle G\rangle\in I\}.
					\end{equation}
					Now comparing \eqref{ker from map} and \eqref{ker from generator} completes the proof.
			\end{proof}\end{lemma}
			
			\begin{proposition}\label{leading}
				Let $V$ be classically free. 
				If an expression $P\in\mathbb{C}[JS^*]$ satisfies $\deg P=d,\ldep P>0$ and $\overline{P}\bm1=0$, then $\widetilde{P}\bm1\in O_{d-1}$.
				\begin{proof}
					Fix $d$. We proceed by decreasing induction on $l=\ldep P$.
					When $l$ is sufficiently large (e.g. $l>d$), there is no such $P$, and thus the proposition holds trivially. 
					For a general $l$, we consider the $\dep=l$ component of $P$. Let
					\begin{equation}\label{bottom term}
						P=P_l+P_+,\quad\text{where }\dep P_l=l\text{ and }\ldep P_+>l.
					\end{equation}
					Then $\overline{P}\bm1=0$ implies that $\overline{P_l}\bm1\in F^{l+1}V$. By \Cref{classical freeness restatement}, we have
					\begin{equation}\label{element of differential ideal}
						\langle P_l\rangle=\sum_i\langle Q_i\rangle\partial^{n_i}\langle G_i\rangle,\quad
							\text{where }\dep G_i=0\text{ and }\overline{G_i}\bm1\in F^1V.
					\end{equation}
					
					First, we rearrange the sum \eqref{element of differential ideal}. 
					Since $F^1V\subseteq V$ is a homogeneous subspace, 
						we can assume that $\deg G_i$ is well-defined (by necessarily splitting $G_i$ into its $\deg$-homogenous components). 
					Since $\dep P_l=l$ and $\dep\partial^{n_i}G_i=n_i$ are well-defined, we may replace each $Q_i$ with its $\dep=l-n_i$ component. 
					Thus we can assume that $\dep Q_i$ is also well-defined. Hence, we can assume
					\begin{align*}
						d=\deg P_l&=\deg Q_i+n_i+\deg G_i, \\
						l=\dep P_l&=\dep Q_i+n_i+\dep G_i
					\end{align*}
					for all $i$'s, without loss of generality.
					
					Since $\overline{G_i}\bm1\in F^1V$, 
						we can choose $G_i'$ with $\deg G_i=\deg G_i'$ and $\ldep G_i'>0$, such that $\overline{G'_i}\bm1=\overline{G_i}\bm1$. 
					Let
					\begin{align*}
						P_0&=\sum_iQ_i\partial^{n_i}G_i, \\
						P'&=\sum_iQ_i\partial^{n_i}G_i'.
					\end{align*}
					By \Cref{restatement of derivation}(ii), we have
					\begin{align*}
						\widetilde{P_0}\bm1=\sum_i\widetilde{Q_i}\widetilde{\partial^{n_i}G_i}\bm1
						&\equiv\sum_i\widetilde{Q_i}\widetilde{T}^{n_i}\widetilde{G_i}\bm1\pmod{O_{d-1}}, \\
						\widetilde{P'}\bm1=\sum_i\widetilde{Q_i}\widetilde{\partial^{n_i}G_i'}\bm1
						&\equiv\sum_i\widetilde{Q_i}\widetilde{T}^{n_i}\widetilde{G_i'}\bm1\pmod{O_{d-1}}.
					\end{align*}
					And by construction of $G_i'$, we have
					$$\widetilde{Q_i}\widetilde{T}^{n_i}(\widetilde{G_i}\bm1-\widetilde{G_i'}\bm1)\in\widetilde{Q_i}\widetilde{T}^{n_i}V_{<\deg G_i}.$$
					Notice that $\dep Q_i+n_i=l>0$ implies that either $\dep Q_i>0$ or $n_i>0$.
					
					If $\dep Q_i>0$, then by \Cref{restatement of formulae}(iii), we have
					$$\widetilde{Q_i}\widetilde{T}^{n_i}V_{<\deg G_i}\subseteq\widetilde{Q_i}V_{<n_i+\deg G_i}\subseteq O_{d-1}.$$
					And if $n_i>0$, then by \Cref{restatement of formulae}(ii), we also have
					$$\widetilde{Q_i}\widetilde{T}^{n_i}V_{<\deg G_i}\subseteq\widetilde{Q_i}O_{n_i+\deg G_i-1}\subseteq O_{d-1}.$$
					Here we use the obvious fact that $\widetilde{T}V_{\leq n}=V_{\leq n}\circ\bm1\subseteq O_{n+1}$.
					
					Hence we have
					\begin{equation}\label{transition}
						\widetilde{P_0}\bm1\equiv\widetilde{P'}\bm1\pmod{O_{d-1}}.
					\end{equation}
					On the other hand, note that $\langle P_l\rangle=\langle P_0\rangle$, we apply \Cref{deepen} to expression $P_l-P_0$. 
					Then we obtain an expression $R$ with $\ldep R>l$, such that
					\begin{align}
					\label{mode-rewritten}
						\overline{R}\bm1&=\overline{P_l}\bm1-\overline{P_0}\bm1, \\
					\label{Zhu-rewritten}
						\widetilde{R}\bm1&\equiv\widetilde{P_l}\bm1-\widetilde{P_0}\bm1\pmod{O_{d-1}}.
					\end{align}
					Note that the construction of $G_i'$ guarantees that $\overline{P_0}\bm1=\overline{P'}\bm1$. 
					Now it follows from \eqref{bottom term}, \eqref{transition}, \eqref{mode-rewritten} and \eqref{Zhu-rewritten} that
					\begin{align*}
						0=\overline{P}\bm1&=\overline{P'}\bm1+\overline{R}\bm1+\overline{P_+}\bm1, \\
						\widetilde{P}\bm1&\equiv\widetilde{P'}\bm1+\widetilde{R}\bm1+\widetilde{P_+}\bm1\pmod{O_{d-1}}.
					\end{align*}
					Since $\ldep(P'+R+P_+)>l$, the induction hypothesis applies.
			\end{proof}\end{proposition}
			
			The use of formal expressions is now ended. Following we translate the result of \Cref{leading} into usual notations.
			\begin{proposition}\label{restate of leading}
				If $V$ is classically free, then for all $d\geq0$ we have
				\begin{equation}\label{reduced state}
					O(V)\cap V_{\leq d}=O_d.
				\end{equation}
			\begin{proof}
				First we prove that
				\begin{equation}\label{induction state}
					O_d\cap V_{<d}\subseteq O_{d-1}
				\end{equation}
				If $x\in O_d\cap V_{<d}$, then we may write
				\begin{equation}\label{typical element in Od}
					x\equiv\sum_ia^i\circ b^i\pmod{O_{d-1}},
				\end{equation}
				where $\deg a^i+\deg b^i+1=d$. Let
				$$P=\sum_i(\partial a^i)b^i.$$
				Then $\deg P=d,\dep P=1>0$, and $x\in V_{<d}$ means that $\overline{P}\bm{1}=0$. 
				Hence by \Cref{leading} and \eqref{typical element in Od}, we have
				$$x\equiv\widetilde{P}\bm1\equiv0\pmod{O_{d-1}}.$$
				
				Now if $x\in O(V)\cap V_{\leq d}$, then we can assume that $x\in O_n$ for some $n>d$. 
				Applying \eqref{induction} iterately, we obtain
				$$x\in O_n\cap V_{\leq d}=O_n\cap V_{\leq n-1}\cap\cdots\cap V_{\leq d}\subseteq O_d$$
				Since $O_{d-1}\subseteq O(V)\cap V_{\leq d}$ is evident, the proof is complete.
		\end{proof}\end{proposition}
	
		\subsection{Proof of Main Theorem}
			We now prove that $R_V\cong\gr A(V)$ when $V$ is classically free.
			\begin{lemma}\label{induction}
				The natural projection $\eta_V:R_V\twoheadrightarrow\gr A(V)$ is an isomorphism, if and only if \eqref{reduced state} holds for all $d\geq0$.
				\begin{proof}
					Suppose that $R_V\cong\gr A(V)$, and $x\in O(V)\cap V_{\leq d}$. We show that $x\in O_d$ by induction on $d$. Let
					$$x=v+\text{lower degree components}.$$
					Then $v+F^1V\in\ker\eta_V$, and hence $v\in F^1V$.
					
					If $d=0$, then $v\in V_0\cap F^1V=0$, and hence $x=0\in O_0$.
					
					If $d>0$, we can write
					$$v=\sum_ja^j_{(-2)}b^j,\quad\deg v=\deg a^j+\deg b^j+1.$$
					Then the induction hypothesis implies that
					$$x-\sum_ja^j\circ b^j\in O(V)\cap V_{\leq d-1}=O_{d-1}.$$
					Note that $a^j\circ b^j\in O_d$. Therefore, $x\in O_d+O_{d-1}=O_d$.
					
					Conversely, suppose that \eqref{reduced state} holds for all $d\geq0$. Let $v\in V_d$ such that $v+F^1V\in\ker\eta_V$. Then we have
					\begin{equation}\label{representative element}
						v\equiv\sum_ia^i\circ b^i\pmod{V_{d-1}}.
					\end{equation}
					We argue by induction on
					$$s=\max_i(\deg a^i+\deg b^i+1)-d\geq0.$$
					Partition the terms according to degree:
					\begin{align*}
						I_1&=\{i\mid\deg a^i+\deg b^i+1=s+d\}, \\
						I_2&=\{i\mid\deg a^i+\deg b^i+1<s+d\}.
					\end{align*}
					When $s=0$, we have
					$$v=\sum_{i\in I_1}a^i_{(-2)}b^i\in F^1V.$$
					Assume now that $s>0$. Then the $\deg=s+d$ component of \eqref{representative element} vanishes:
					$$\sum_{i\in I_1}a^i_{(-2)}b^i=0.$$
					By \eqref{reduced state}, we obtain
					$$\sum_{i\in I_1}a^i\circ b^i\in O(V)\cap V_{\leq s+d-1}=O_{s+d-1}.$$
					Note that $\sum\limits_{i\in I_2}a^i\circ b^i$ automatically lies in $O_{s+d-1}$, hence
					$$\sum_ia^i\circ b^i\in O_{s+d-1}.$$
					That is, there exist $x^j,y^j$ with $\deg x^j+\deg y^j+1<s+d$, such that
					$$v\equiv\sum_ia^i\circ b^i=\sum_jx^j\circ y^j\pmod{V_{d-1}}.$$
					$\max\limits_j(\deg x^j+\deg y^j+1)-d<s$, thus the induction hypothesis applies.
			\end{proof}\end{lemma}
			
			\begin{theorem}\label{main}
				If $V$ is classically free, then the natural projection
				$$\eta_V:R_V\twoheadrightarrow\gr A(V)$$
				defined in \Cref{epimorphism} is an isomorphism.
				\begin{proof}
					It follows immediately from \Cref{restate of leading} and \Cref{induction}.
			\end{proof}\end{theorem}
			
	\section{Applications of the main theorem}\label{sec5}
		Since $R_V$ and $ A(V)$ are usually easier to compute than $\gr V$ for general $V$, we disprove the classical freeness for many VOAs.
		\begin{theorem}\label{Zhu=C}
			Let $V$ be a non-trivial holomorphic VOA of CFT type. Then $V$ is not classically free.
		\end{theorem}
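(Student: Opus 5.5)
We argue by contradiction, using \Cref{main} together with \Cref{Zhu theory}. Suppose $V$ is a non-trivial holomorphic VOA of CFT type which is classically free. Since $V$ is holomorphic, it is rational, so by \Cref{Zhu theory}(4) the algebra $A(V)$ is finite-dimensional and semisimple. Since $V$ is the unique simple admissible module, \Cref{Zhu theory}(3) shows that $A(V)$ has a unique simple module, namely $V_0=\mathbb{C}\bm1$ (using CFT type). Hence $A(V)\cong\End(\mathbb{C})=\mathbb{C}$, and so $\dim A(V)=1$. The filtration $A_n(V)$ is exhaustive, so $\dim\gr A(V)=\dim A(V)=1$. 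By \Cref{main}, classical freeness then forces $\dim R_V=1$, i.e. $F^1V\supseteq V_{\geq1}$, so $R_V=\mathbb{C}$.

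The next step is to derive a contradiction from $R_V=\mathbb{C}$ combined with classical freeness. The jet algebra $\mathcal{J}_\infty\mathbb{C}$ is just $\mathbb{C}$, with zero derivation. Classical freeness then gives $\gr V\cong\mathbb{C}$, concentrated in $\dep=0$. In particular $F^1V/F^2V=0$, $F^2V/F^3V=0$, and so on, so $F^1V=F^pV$ for all $p$.

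It remains to check that $\bigcap_pF^pV=0$, or more precisely that $F^pV\cap V_{\leq d}=0$ once $p>d$. This holds because the generators $a^1_{(-n_1-1)}\cdots a^r_{(-n_r-1)}\bm1$ of $F^pV$ have degree $\sum\deg a^i+\sum n_i\geq p$ in an $\mathbb{N}$-graded $V$. Hence $F^1V=\bigcap_p F^pV=0$, so $V=R_V=\mathbb{C}\bm1$. This contradicts non-triviality. Alternatively, $\gr V\cong V$ as graded vector spaces (the Li filtration is separated and compatible with the grading), so $\dim V=\dim\gr V=1$.

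The only point needing care is the identification $A(V)\cong\mathbb{C}$. One must check that a semisimple finite-dimensional algebra with a unique simple module of dimension $\dim V_0=1$ is $\mathbb{C}$. This follows from Wedderburn's theorem, and the fact that the top level of $V$ is $V_0=\mathbb{C}\bm1$ comes from the CFT type assumption. A second point is that the remark after \Cref{Zhu theory} lets us apply \Cref{main} (stated for $\mathbb{N}$-graded vertex algebras) to $V$. Everything else is formal.
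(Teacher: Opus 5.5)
Your proposal is correct and follows the paper's proof essentially step for step: you get $A(V)\cong\mathbb{C}$ from Zhu's correspondence and Wedderburn, then $R_V\cong\gr A(V)\cong\mathbb{C}$ by the main theorem, so classical freeness gives $\gr V\cong\mathcal{J}_\infty\mathbb{C}=\mathbb{C}$, the Li filtration stabilizes at $F^1V$, and separatedness ($F^pV\subseteq V_{\geq p}$) forces $F^1V=0$ and hence $V=\mathbb{C}\bm1$. Your degree count showing $F^pV\cap V_{\leq d}=0$ for $p>d$, and your alternative dimension comparison between $V$ and $\gr V$, are slightly more explicit versions of the paper's separatedness step.
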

		\begin{proof}
			By \Cref{Zhu theory}(iv), $ A(V)$ is a finite dimensional semisimple algebra whose unique simple module is $V_0\cong\mathbb{C}$.
			Hence by Artin-Wedderburn theorem, we have
			$$ A(V)\cong\mathbb{C}.$$
			Assume that $V$ is classically free, then by \Cref{main} we have
			\begin{equation}\label{dimR_V}
				R_V\cong\gr A(V)\cong\mathbb{C},
			\end{equation}
			and thus
			$$\gr V\cong\mathcal{J}_\infty\mathbb{C}=\mathbb{C}.$$
			Therefore, we have $(\gr V)_l=0$ for $l>0$, i.e.
			\begin{equation}\label{degeneration}
				F^1V=F^2V=\cdots=F^lV=\cdots.
			\end{equation}
			On the other hand, for an $\mathbb{N}$-graded VOA we have $F^pV\subseteq V_{\geq p}$, 
			which implies that Li filtration is separated, i.e.
			\begin{equation}\label{separated}
				\bigcap_{p>0}F^pV=0
			\end{equation}
			Now it follows from \eqref{degeneration}, \eqref{separated} and \eqref{dimR_V} that
			$$V=V/\bigcap_{p>0}F^pV=V/F^1V=R_V=\mathbb{C},$$
			contradicting with that $V$ is non-trivial. Hence $V$ is not classically free.
		\end{proof}
		
		\begin{corollary}
			Let $L$ be a non-trivial positive-definite even self-dual lattice, then the lattice VOA $V_L$ is not classically free.
			\begin{proof}
				Since $L$ is positive-definite even, $V_L$ is rational \cite{B, FLM, D2}. 
				According to \cite[Theorem 3.1]{D2}, all irreducible admissible $V_L$-modules are
				$$V_{\lambda+L},\quad\lambda\in L^*/L.$$
				Hence $V_L$ is holomorphic if $L^*=L$. Note that $V_L$ is automatically of CFT type. Therefore, $V_L$ is not classically free due to \Cref{Zhu=C}.
		\end{proof}\end{corollary}
			
		Among simple root lattices, only the $E_8$ root lattice is even and self-dual.
		\begin{example}\label{E8}
			$L_1(E_8)=V_{E_8}$ is not classically free.
		\end{example}
		
		Another famous example of positive-definite even self-dual lattice is the Leech lattice.
		\begin{example}\label{Leech}
			Let $\Lambda$ be the Leech lattice, then $V_\Lambda$ is not classically free.
		\end{example}
		
		Moreover, the  Moonshine VOA is rational \cite{B, FLM, D1}, holomorphic \cite{D1}, and of CFT type since it is an orbifold of $V_\Lambda$.
		\begin{example}\label{moonshine}
			The  Moonshine VOA is not classically free.
		\end{example}
		
	\section{Direct calculation of Lower weights of \texorpdfstring{$\gr L_1(E_8)$}{}}\label{sec6}
		In the last section, we provide a direct calculation proving that $L_1(E_8)$ is not classically free. 
		Let $\mathfrak{g}$ be the Lie algebra of $E_8$, and let $\mathfrak{h}$ be its Cartan subalgebra.
		\subsection{Character of affine vertex algebra of \texorpdfstring{$E_8$}{} at level one}\mbox{}
			
			Consider the $q$-dimension of the lattice VOA $V_{E_8}=L_1(E_8)$:
			$$\dim_qV_{E_8}=\Tr_{V_{E_8}}q^{L_0}.$$
			Since $V_{E_8}=V_{\mathfrak{h}}\otimes\mathbb{C}[Q_{E_8}]$, where $Q(E_8)$ is the root lattice of $E_8$, we have
			\begin{equation}\label{q-dim}
				\dim_qV_{E_8}=\Tr_{V_\mathfrak{h}}q^{L_0}\cdot\Tr_{\mathbb{C}[Q_{E_8}]}q^{L_0}=\frac{\Theta_{E_8}(\tau)}{\varphi(q)^8},
			\end{equation}
			where
			$$\varphi(q)=\prod_{n=1}^\infty(1-q^n)$$
			is the Euler product, and
			$$\Theta_{E_8}(\tau)=\sum_{\alpha\in Q(E_8)} q^{(\alpha,\alpha)/2},\qquad q=e^{2\pi i\tau}.$$
			We can calculate that
			\begin{align}
				\notag
				\varphi(q)^{-8}&=(1-8q+20q^2-70q^4+\cdots)^{-1} \\
				\label{Theta}
				&=1+8q+44q^2+192q^3+726q^4+\cdots.
			\end{align}
			Using identity $\Theta_{E_8}(\tau)=E_4(\tau)$ (see \cite[Chapter VII, \S6.6]{S}), we obtain
			\begin{equation}\label{Eisenstein}
				\Theta_{E_8}(\tau)=E_4(\tau)=1+240q+2160q^2+6720q^3+17520q^4+\cdots.
			\end{equation}
			Hence by \eqref{q-dim}, \eqref{Theta} and \eqref{Eisenstein} we have
			\begin{equation}\label{q-expansion}
				\dim_qV_{E_8}=1+248q+4124q^2+34752q^3+213126q^4+\cdots.
			\end{equation}
			
		\subsection{Tensor product of irreducible modules of \texorpdfstring{$E_8$}{}}\mbox{}
			
			In this subsection we list dimensions of irreducible highest weight $\mathfrak{g}$-modules involved, as well as some decompositions we need. 
			All the results are computed by Sage.
			
			Let $w_i,1\leq i\leq 8$ be the fundamental weights of $E_8$. 
			The following are the dimensions of some irreducible highest weight $\mathfrak{g}$-modules:
			
			\begin{center}\begin{tabular}{rccccc}\hline
				weights: & $w_1$ & $w_2$ & $w_3$ & $w_4$ &\\
				dimension:	& 3875 & 147250 & 6696000 & 6899079264 &\\ \hline
				weights: & $w_5$ & $w_6 $& $w_7$ & $w_8$ & $2w_8$\\
				dimension:	& 146325270 & 2450240 & 30380 & 248 & 27000 \\ \hline
			\end{tabular}\end{center}
			
			Let $L(\lambda)$ be the irreducible $\mathfrak{g}$-module with highest weight $\lambda$. Then
			$$L(w_8)\cong\mathfrak{g}.$$
			Denote $L(2w_8)$ by $I$, then we have the following decompositions:
			\begin{align}
			\label{S2}
				S^2(\mathfrak{g})&\cong\mathbb{C}\oplus L(w_1)\oplus I, \\
			\label{g*w1}
				\mathfrak{g}\otimes L(w_1)&\cong L(w_7)\oplus\mathfrak{g}\oplus L(w_1+w_8)\oplus L(w_1)\oplus L(w_2), \\
			\label{g*I}
				\mathfrak g\otimes I&\cong L(w_7)\oplus L(w_7+w_8)\oplus\mathfrak{g}\oplus L(w_1+w_8)\oplus I\oplus L(3w_8), \\
			\label{S3}
				S^3(\mathfrak{g})&\cong L(w_7)\oplus\mathfrak{g}\oplus L(w_1+w_8)\oplus L(3w_8).
			\end{align}
			
		\subsection{Dimension contradiction}\mbox{}
			
			Let $x_i$ be a basis of $\mathfrak{g}$, $e_\theta$ be a vector of the highest root in $\mathfrak{g}$. Let
			$$R_V=S(\mathfrak{g})/J.$$
			Then $J$ admits a weight decomposition
			$$J=\bigoplus_{n\geq2}J_n,$$
			and $I\subseteq J_2$ (see \cite{K1}). 
			Assume that $L_1(\mathfrak{g})$ is classically free, then
			$$\gr(L_1(\mathfrak{g}))\cong\mathbb{C}[\partial^kx_i|k\in\mathbb{N}]/\langle\partial^ts|s\in J, t\in\mathbb{N}\rangle.$$
			Now we calculate the dimension of $\gr(L_1(\mathfrak{g}))=\bigoplus\limits_{n\in\mathbb{N}} V_n$ at each conformal weight $n$, 
				under the assumption that $L_1(E_8)$ is classically free. 
			Following we always decompose the spaces according to differential degree at first.
			
			\begin{enumerate}[label=(\arabic*), start=0]
				\item	$V_0=\mathbb{C}$, and $\dim V_0=1$.	
				\item 	$V_1=\mathfrak{g}$, and $\dim V_1=248$.
				\item 	By \eqref{S2} we have
					$$V_2=\frac{S^2(\mathfrak{g})}{J_2}\oplus\partial\mathfrak{g}\cong\frac{\mathbb{C}\oplus L(w_1)}{J_2/I}\oplus\mathfrak{g}.$$
					Taking the dimension yields
					$$\dim J_2/I=1+3875+248-4124=0.$$
					Hence $J_2=I$.
				\item	The ideal in $V_3$ is $\mathfrak{g}\otimes I\oplus\partial I$, therefore,
					\begin{align*}
						V_3&=\frac{S^3(\mathfrak{g})}{J_3}\oplus\frac{\mathfrak{g}\otimes\partial\mathfrak{g}}{\partial I}\oplus\partial^2\mathfrak{g} \\
						&\cong\frac{S^3(\mathfrak{g})}{J_3}\oplus\frac{\mathfrak{g}\otimes\mathfrak{g}}I\oplus\mathfrak{g}.
					\end{align*}
					Taking the dimension yields
					$$\dim S^3(\mathfrak{g})/J_3=34752-(248^2-27000)-248=0.$$
					Hence $J_3=S^3(\mathfrak{g})$, and it follows that
					\begin{equation}\label{ideal}
						J_n=S^n(\mathfrak{g}),\qquad n\geq3.
					\end{equation}
				\item	Using \eqref{ideal}, we obtain
					\begin{align*}
						V_4&=\frac{S^4(\mathfrak{g})}{J_4}
							\oplus\frac{S^2(\mathfrak{g})\otimes\partial\mathfrak{g}}{\mathfrak{g}\partial I+I\partial\mathfrak{g}+\partial J_3}
							\oplus\frac{\mathfrak{g}\otimes\partial^2\mathfrak{g}\oplus S^2(\partial\mathfrak{g})}{\partial^2I}\oplus\partial^3\mathfrak{g} \\
						&=\frac{S^2(\mathfrak{g})\otimes\partial\mathfrak{g}}{\mathfrak{g}\partial I+I\partial\mathfrak{g}+\partial S^3(\mathfrak{g})}
							\oplus\frac{\mathfrak{g}\otimes\mathfrak{g}\oplus S^2(\mathfrak{g})}I\oplus\mathfrak{g}.
					\end{align*}
					Taking dimension yields
					\begin{equation}\label{F1V4}
						\dim\frac{S^2(\mathfrak{g})\otimes\partial\mathfrak{g}}{\mathfrak{g}\partial I+I\partial\mathfrak{g}+\partial S^3(\mathfrak{g})}
							=213126-34752-\binom{249}2=147498.
					\end{equation}
					On the other hand, \eqref{S2} and \eqref{g*w1} show the inclusion
					$$L(w_1)\oplus L(w_2)\hookrightarrow L(w_1)\otimes\mathfrak{g}\hookrightarrow S^2(\mathfrak{g})\otimes\mathfrak{g}.$$
					Meanwhile, neither $\mathfrak g\otimes I$ nor $S^3(\mathfrak{g})$ contains a piece $L(w_1)$ or $L(w_2)$. 
					Let
					\begin{align*}
						\Sym_{12}:\mathfrak{g}\otimes\mathfrak{g}\otimes\partial\mathfrak{g}&\to S^2(\mathfrak{g})\otimes\partial\mathfrak{g} \\
						x\otimes y\otimes\partial z&\mapsto xy\otimes\partial z.
					\end{align*}
					Then
					\begin{align*}
						\mathfrak{g}\partial I&=\Sym_{12}(\mathfrak{g}\otimes\partial I), \\
						I\partial\mathfrak{g}&=\Sym_{12}(I\otimes\partial\mathfrak{g}).
					\end{align*}
					Note that
					$$\mathfrak{g}\otimes\partial I\cong\mathfrak{g}\otimes I\cong I\otimes\partial\mathfrak{g},$$
					therefore, both $\mathfrak{g}\partial I$ and $I\partial\mathfrak{g}$ are quotients of $g\otimes I$, 
						and hence do not contain a piece $L(w_1)$ or $L(w_2)$. 
					We now conclude that
					$$\dim\frac{S^2(\mathfrak{g})\otimes\partial\mathfrak{g}}{\mathfrak{g}\partial I+I\partial\mathfrak{g}+\partial S^3(\mathfrak{g})}
						\geq\dim L(w_1)+\dim L(w_2)=151125.$$
					This contradicts with \eqref{F1V4}. Hence $L_1(E_8)$ is not classically free.
			\end{enumerate}

\end{document}